\numberwithin{equation}{section}
		\theoremstyle{plain}
			\newtheorem{theorem}{Theorem}[section]
      \newtheorem{lemma}[theorem]{Lemma}
     \newtheorem{question}[theorem]{Question} \newtheorem{remark}[theorem]{Remark}
      \theoremstyle{definition}
      \newtheorem{definition}[theorem]{Definition}
        \newtheorem{proposition}[theorem]{Proposition}    
      \theoremstyle{remark}
      \newtheorem{example}[theorem]{Example}
\newcommand\with{\ \vrule\ }  
\newcommand\C{\mathbb C}         
\newcommand\R{\mathbb R}         
\newcommand\N{\mathbb N}         
\newcommand\Ha{\mathbb H}        
\newcommand\D{\mathbb D}         
\renewcommand\P{\mathfrak{P}}
\renewcommand\i{\operatorname{i}}        
\renewcommand\Im{\text{Im}}        
\renewcommand\Re{\text{Re}}        
\newcommand{\Landauo}{{\scriptstyle\mathcal{O}}}
\newcommand{\K}{\mathcal K}
\newcommand{\B}{\mathbb B}
\newcommand{\I}{\text{Inf}(\mathbb{B}_n)}
\newcommand{\II}{\text{Inf}(\mathbb{D})}
\newcommand{\III}{\text{Inf}(\mathbb{H})}
\newcommand{\Ho}{\mathcal{H}} 
\begin{document}

\setcounter{section}{0}

\title{Chordal generators and \\the hydrodynamic normalization for the unit ball}
\author{Sebastian Schlei\ss inger\thanks{Supported by the ERC grant  ``HEVO - Holomorphic Evolution Equations'' n. 277691.}}
\date{\today}
\maketitle
\abstract{Let $c\geq0$ and denote by $\K(\Ha,c)$ the set of all infinitesimal generators $G:\Ha\to\C$ on the upper half-plane $\Ha$ such that $\limsup_{y\to\infty}y\cdot |G(iy)|\leq c.$ This class is related to univalent functions $f:\Ha\to\Ha$ with hydrodynamic normalization and appears in the so called chordal Loewner equation. \\
In this paper, we generalize the class $\K(\Ha,c)$ and the hydrodynamic normalization to the Euclidean unit ball in $\C^n$.
 The generalization is based on the observation that $G\in\K(\Ha,c)$ can be characterized by an inequality for the hyperbolic length of $G(z).$ }\\

{\bf Keywords:} Semigroups of holomorphic mappings, infinitesimal generators, hydrodynamic normalization, chordal Loewner equation\\

{\bf 2010 Mathematics Subject Classification:} 20M20, 32A40, 37L05.

\tableofcontents
\newpage
\section{Introduction}
\setlength{\parindent}{0pt}
\subsection{One-parameter semigroups}

Let $\mathbb{B}_n=\{z\in\C^n \,|\, \|z\|<1\}$ be the Euclidean unit ball in $\C^n$. In one dimension we write $\D:=\B_1$ for the unit disc.

\begin{definition} \label{semi}
 A continuous one-real-parameter semigroup of holomorphic functions on $\B_n$ is a map $[0,\infty)\ni t\mapsto \Phi_t \in \mathcal{H}(\B_n, \B_n)$ satisfying the following conditions: 
 \begin{itemize}
\item[(1)] $\Phi_0$ is the identity,  
\item[(2)] $\Phi_{t+s}=\Phi_t\circ \Phi_s$ for all $t,s\geq 0,$
\item[(3)] $\Phi_t$ tends to the identity locally uniformly in $\B_n$ when $t$ tends to $0.$ \end{itemize}
\end{definition}

Given such a semigroup $\{\Phi_t\}_{t\geq 0}$ and a point $z\in \B_n,$ then the limit $$G(z):=\lim_{t\to 0}\frac{\Phi_t(z)-z}{t}$$ exists and the vector field $G:\B_n\to\C^n$, called the \textit{infinitesimal generator}\footnote{There is no standard convention in the literature and often $-G$ is called the infinitesimal generator of the semigroup.} of $\Phi_t,$ is a holomorphic function (see, e.g., \cite{MR1174816}). We denote by $\I$ the set of all infinitesimal generators of semigroups in $\B_n$. For any $z\in \B_n,$ the map $w(t):=\Phi_t(z)$ is the solution of the initial value problem \begin{equation}\label{cauchy}\frac{dw(t)}{dt}=G(w(t)),\quad w(0)=z.\end{equation} 

There are various characterizations of holomorphic functions $G:\B_n\to \C^n$ that are infinitesimal generators; 
see \cite{MR2022955} (Section 7.3), \cite{MR2578602} (Theorem 0.2), \cite{MR3184757} (p. 193).\\

The set $\II$, i.e. all infinitesimal generators in the unit disc, can be characterized completely by the Berkson-Porta representation formula (see \cite{MR0480965}):
\begin{equation}\label{BerksonPorta}
\II=\{z\mapsto (\tau-z)(1-\overline{\tau}z)p(z)\with \tau\in\overline{\D}, \; p\in\Ho(\D,\C) \;\text{with}\; \Re(p(z))\geq0 \;\text{for all}\; z\in\D\}.
\end{equation}

\begin{remark}
 
Let $F:\D\to \D$ be a holomorphic self-map. Recall the Denjoy-Wolff theorem (see, e.g., \cite{MR2022955}, Theorem 5.1): If $F$ is not an elliptic automorphism (i.e. an automorphism with exactly one fixed point in $\D$), then there exists one point $\tau \in \overline{\D}$ (the Denjoy-Wolff point of $F$) such that the iterates $F^n$ converge locally uniformly in $\D$ to the constant map $\tau.$\\

 If $\{\Phi_t\}_{t\geq 0}$ is a semigroup on $\D$, then we call $\tau\in \overline{\D}$ the Denjoy-Wolff point of $\{\Phi_t\}_{t\geq 0}$ if $\tau$ is the Denjoy-Wolff point of $\Phi_1$, which is equivalent to $\lim_{t\to\infty}\Phi_t = \tau$ locally uniformly. 
 
If an infinitesimal generator in the unit disc does not generate a semigroup of elliptic automorphisms of $\D,$ then the point $\tau \in \overline{\D}$ from formula (\ref{BerksonPorta}) is exactly the Denjoy-Wolff point of the semigroup.
\end{remark}

There are two special cases of infinitesimal generators in $\D$ that have been studied intensively and turned out to be quite useful in Loewner theory and its applications. 
The two different cases arise from certain normalizations of the Berkson-Porta data $\tau$ and $p$ from formula (\ref{BerksonPorta}). In the \emph{radial} case, one considers those elements $G \in \II$ whose Berkson-Porta data $\tau$ and $p$ satisfy
$$\tau = 0 \qquad \text{and} \qquad p(0)=1,$$
i.e. $G(z)=-zp(z)$. 
\\

This class plays a central role in studying the class $S$ of all univalent functions $f:\D\to\C$ with $f(0)=0,$ $f'(0)=1,$ by the powerful tools of Loewner's theory; see, e.g., \cite{Pom:1975}, Chapter 6. The class of radial generators as well as the class $S$ have been generalized in this context to the polydisc $\D^n$, see \cite{MR1049182, MR1049183}, and to the unit ball $\B_n$, see \cite{graham2003geometric} for a collection of several results and references.\\

The second class, the set of all \emph{chordal} generators\footnote{
Note that there is no standard use of the words ``radial'' and ``chordal'' in the literature. In \cite{MR2719792}, e.g., an element $G\in \II$ is called \emph{radial} if $\tau \in \D$ and chordal if $\tau \in \partial \D.$ } , consists of all $G \in \II$ whose Berkson-Porta data $\tau$ and $p$ satisfy
$$\tau = 1 \qquad \text{and} \qquad \angle \lim_{z\to1}\frac{p(z)}{z-1} \quad \text{is finite}.
$$

The aim of this paper is to introduce a generalization of the chordal class for the unit ball $\B_n$. 

\subsection{The hydrodynamic normalization in one dimension}

Instead of fixing an interior point, like in the class $S$, it can be of interest to investigate univalent self-mappings of $\D$ that fix a boundary point. In this case, one usually passes from $\D$ to the upper half-plane $\Ha=\{z\in\C\with \Im(z)>0\}.$\\
A class of such mappings that is easy to describe and that appears in several applications is the set of all univalent mappings $f:\Ha\to\Ha$ that fix the boundary point $\infty$ and have the so called \emph{hydrodynamic normalization}.
Basic properties of this class can be found in \cite{MR1201130}, see also \cite{MR2107849} and \cite{MR2719792}. One of its main applications is the chordal Loewner equation, see \cite{AbateBracci:2010}, Section 4, for further references. \\

A univalent function $f:\Ha\to\Ha$ has \emph{hydrodynamic normalization} (at $\infty$) if $f$ has the expansion
$$f(z)=z-\frac{c}{z}+\gamma(z),$$
where $c\geq0$, which is usually called \emph{half-plane capacity}, and $\gamma$ satisfies $\angle\lim_{z\to \infty}z\cdot\gamma(z)=0.$\\
We denote by $\P$ the set of all these functions. Then $\P$ is a semigroup and the functional $l:\P\to [0,\infty)$, $l(f)=c,$ is additive: If $f_1,f_2\in \P,$ then $f_1 \circ f_2\in \P$ and $l(f_1 \circ f_2)= l(f_1) + l(f_2).$

\begin{remark} Let $f\in \P$ with $l(f)=c.$ If we transfer $f$ to the unit disc by conjugation by the Cayley transform, then we obtain a function $\tilde{f}:\D\to\D$ having the expansion
$$\tilde{f}(z)=z-\frac{c}{4}(z-1)^3+\tilde{\gamma}(z),$$
where $\angle\lim_{z\to1}\frac{\tilde{\gamma}(z)}{(z-1)^3}=0.$
\end{remark}

If $\{\Phi_t\}_{t\geq 0}$ is a one-real-parameter semigroup contained in $\P$ with $l(\Phi_1)=a,$ then it is easy to see that $l(\Phi_t)=a\cdot t.$ If $H$ is the generator of this semigroup, then we also define $l(H):=a.$

We will be interested in the following set of chordal generators.
\begin{definition}By $\K(\Ha, c)$ we denote the set of all infinitesimal generators $H$ of one-real parameter semigroups $\{\Phi_t\}_{t\geq0}$ contained in $\P$ with $l(H)\leq c.$
\end{definition}

\begin{remark}\label{rmk:1}
The set $\K(\Ha, c)$ can be characterized in various ways; see \cite{MR1201130}, Section 1 and \cite{MR1165862}, Proposition 2.2.

It is known that $H\in \K(\Ha, c)$ for some $c\geq 0$ if and only if $H$ maps $\Ha$ into $\overline{\Ha}$ and
\begin{equation}\label{limsup}\limsup_{y\to\infty}y |H(iy)|\leq c. \end{equation}
In fact, $l(H)=\limsup_{y\to\infty}y |H(iy)|.$\\
Furthermore, this is equivalent to: $H$ maps $\Ha$ into $\overline{\Ha}$ and
\begin{equation}\label{chordal_ineq} |H(z)|\leq \frac{c}{\Im(z)}
\end{equation}
for all $z\in \Ha$. The number $l(H)$ is the smallest constant such that this inequality holds.\\
Finally,  it is known that this property is equivalent to the fact that $-G$ is the Cauchy transform of a finite, non-negative Borel measure $\mu$ on $\R$, i.e. \begin{equation}\label{measure}H(z)=\int_{\R}\frac{\mu(du)}{u-z}.\end{equation}
The number $l(H)$ can be calculated by $l(H)=\mu(\R).$
\end{remark}

\begin{remark}\label{classP} It is easy to see that the following holds: if $f\in \P$ with $c=l(f)$, then $H:=f-id \in \K(\Ha,c)$ with $l(H)=c$.
\end{remark}

Let $C:\Ha \to \D,$ $C(z)=\frac{z-i}{z+i},$ be the Cayley map. We define $\K(\D, c)$ by 
$$\K(\D, c) = \{ C'(C^{-1}) \cdot (H \circ C^{-1}) \,|\, H \in \K(\Ha, c)\}.\footnote{If $\{\Phi_t\}_{t\geq0}$ is a semigroup in $\Ha$ with generator $H$, then $\{C \circ \Phi_t \circ C^{-1}\}_{t\geq 0}$ is a semigroup in $\D$ and its generator is given by $C'(C^{-1}) \cdot (H \circ C^{-1})$.}$$ 

The rest of this paper is organized as follows: In Section \ref{sec2} we look for an invariant characterization of chordal generators, i.e. of the sets $\K(\Ha, c)$ and $\K(\D, c),$ and we introduce the class $\K(\B_n, c)$ for the higher dimensional unit ball. It will turn out to be quite useful to study ``slices'' of this class, which is done in Section \ref{sec3}. In Section \ref{sec4} we introduce and study the class $\P_n,$ a higher dimensional analog of the class $\P$.

\section{Chordal generators in higher dimensions}\label{sec2}

\subsection{Invariant formulation for \texorpdfstring{$\K(\D, c)$}{} and \texorpdfstring{$\K(\Ha, c)$}{}}

For $R>0$ we let $E_\D(1, R)$ be the horodisc in $\D$ with center $1$ and radius $R,$ i.e. $$E_\D(1, R)=\left\{z\in\D \,|\, \frac1{|u_\D(z)|}<R\right\},$$  where $u_\D(z)=-\frac{1-|z|^2}{|1-z|^2}$ is the Poisson kernel in $\D$ with respect to $1$.\\
By using the Cayley map we define analogously $E_\Ha(\infty, R)= C^{-1}(E_\D(1,R))=\{z\in \Ha \,|\, \frac1{\Im(z)}<R\}.$
For $z\in \D$ and a tangent vector $v\in\C$ we denote by $|v|_{\D,z}$ the hyperbolic length of $v$ (with curvature -1), i.e. $$|v|_{\D,z}:=\frac{2|v|}{1-|z|^2}.$$ Furthermore, we let $R_\D(z)$ be the radius $R$ of the horodisc $E_\D(1,R)$ that satisfies $z\in \partial E(1,R)$; in short $R_\D(z)=\frac1{|u_\D(z)|}.$ Analogously,  for $z\in \Ha$ and $v\in\C,$ we define $R_\Ha(z):=1/\Im(z)$ and the hyperbolic length $|v|_{\Ha,z}:=|v|/\Im(z).$\\  

According to \eqref{chordal_ineq} we know that $H\in  \K(\Ha, c)$ if and only if $H$ maps $\Ha$ into $\overline{\Ha}$ and $|H(z)|\leq c/\Im(z)$ for all $z\in \Ha.$ By using the Berkson-Porta formula it is easy to see that we can rephrase this to: $H\in \K(\Ha, c)$ if and only if $H \in \III$ and $|H(z)|\leq c/\Im(z)$ for all $z\in \Ha$. \\
The last inequality is equivalent to $|H(z)|/\Im(z)\leq c/\Im(z)^2$ or

$$ |H(z)|_{\Ha,z} \leq \frac{c}{\Im(z)^2} = c\cdot R_\Ha(z)^2.$$

If we pass from $\Ha$ to $\D$ and transform $H$ into $G=C'(C^{-1}) \cdot (H \circ C^{-1}),$ then $G$ satisfies
$|G(C(z))|_{\D,C(z)} = |H(z)|_{\Ha,z}$ and we immediately get the following characterization. 

\begin{proposition}\label{observe} Let $G \in \II$. Then 
$$ G\in \K(\D, c) \quad \iff \quad |G(z)|_{\D,z} \leq c\cdot R_\D(z)^2 \quad \text{for all} \quad z\in\D.$$
 Let $H \in \III$. Then 
$$ H\in \K(\Ha, c) \quad \iff \quad |H(z)|_{\Ha,z} \leq c\cdot R_\Ha(z)^2 \quad \text{for all} \quad z\in\Ha.$$
\end{proposition}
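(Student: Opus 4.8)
The plan is to treat the half-plane statement first and then transfer it to the disc via the Cayley map $C$, exploiting that the quantities appearing on both sides of the claimed inequalities are conformal invariants.

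For the statement on $\Ha$, I would simply rewrite the characterization already available. By \eqref{chordal_ineq} together with the Berkson--Porta rephrasing noted above, $H\in\K(\Ha,c)$ holds if and only if $H\in\III$ and $|H(z)|\leq c/\Im(z)$ for every $z\in\Ha$. Dividing this inequality by $\Im(z)>0$ gives $|H(z)|/\Im(z)\leq c/\Im(z)^2$, and inserting the definitions $|H(z)|_{\Ha,z}=|H(z)|/\Im(z)$ and $R_\Ha(z)=1/\Im(z)$ turns this into $|H(z)|_{\Ha,z}\leq c\cdot R_\Ha(z)^2$. Since each manipulation is an equivalence, the two conditions are equivalent, which is the second assertion.

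For the statement on $\D$, I would argue by transport along the Cayley map. By the definition of $\K(\D,c)$, an element $G\in\II$ lies in $\K(\D,c)$ exactly when $G=C'(C^{-1})\cdot(H\circ C^{-1})$ for some $H\in\K(\Ha,c)$; equivalently, writing $w=C(z)$, the generators are related by $G(C(z))=C'(z)\,H(z)$. Two invariances make the transfer work. First, the conformal invariance of the hyperbolic length: since $C:\Ha\to\D$ is a hyperbolic isometry for the chosen normalizations, the chain rule yields $|G(C(z))|_{\D,C(z)}=|C'(z)H(z)|_{\D,C(z)}=|H(z)|_{\Ha,z}$, exactly as recorded just before the statement. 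Second, the correspondence of horodiscs: from $E_\Ha(\infty,R)=C^{-1}(E_\D(1,R))$ one reads off that $z\in\partial E_\Ha(\infty,R)$ if and only if $C(z)\in\partial E_\D(1,R)$, whence $R_\D(C(z))=R_\Ha(z)$. Consequently the inequality $|G(w)|_{\D,w}\leq c\cdot R_\D(w)^2$ at the point $w=C(z)$ is literally the inequality $|H(z)|_{\Ha,z}\leq c\cdot R_\Ha(z)^2$ at $z$; as $C$ is a bijection from $\Ha$ onto $\D$, one of these holds for all $z$ if and only if the other holds for all $w$, and the disc equivalence follows from the half-plane one.

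There is essentially no deep obstacle here: the content is entirely in checking that the paper's normalizations are mutually consistent, i.e. that the factor $2/(1-|z|^2)$ in $|\cdot|_{\D,z}$ and the factor $1/\Im(z)$ in $|\cdot|_{\Ha,z}$ make $C$ an exact isometry (so that no stray constant appears in $|G(C(z))|_{\D,C(z)}=|H(z)|_{\Ha,z}$), and that the horodisc family $E_\D(1,R)$ pulls back precisely to $E_\Ha(\infty,R)$ (so that $R_\D\circ C=R_\Ha$). Both are routine Cayley-map computations, and once they are in hand the proposition reduces to the one-line rewriting carried out for $\Ha$.
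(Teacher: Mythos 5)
Your proof is correct and takes essentially the same route as the paper: the half-plane case is the identical rewriting of \eqref{chordal_ineq} (after the Berkson--Porta rephrasing) in terms of $|H(z)|_{\Ha,z}$ and $R_\Ha(z)$, and the disc case is the same transfer along the Cayley map using $|G(C(z))|_{\D,C(z)}=|H(z)|_{\Ha,z}$. The only cosmetic difference is that you verify $R_\D\circ C=R_\Ha$ explicitly, whereas the paper has this built into its definitions, since $E_\Ha(\infty,R)$ is defined as $C^{-1}(E_\D(1,R))$ and computed to be $\{z\in\Ha \,:\, 1/\Im(z)<R\}$.
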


\subsection{Chordal generators in the unit ball}

For $n\in\N$, let $u_{n}$ be the pluricomplex Poisson kernel in $\B_n$ with pole at $e_1:=(1,0,...,0),$ i.e.
$$u_{\B_n,p}=-\frac{1-\|z\|^2}{|1-z_1|^2}.$$
The level sets of $u_{\B_n}$ are exactly the boundaries of horospheres with center $e_1,$ more precisely, the set 
$$E_{\B_n}(e_1,R):=\{z\in \B_n \,|\, |u_{\B_n}(z)|^{-1}<R\}, R>0,$$
is the horosphere with center $e_1$ and radius $R.$\\
Furthermore, for $z\in \B_n$ and $v\in \C^n$ we denote by $\|v\|_{\B_n,z}$ the Kobayashi-hyperbolic length of the vector $v$ with respect to $z.$

Motivated by Proposition \ref{observe}, we make the following definition. 

\begin{definition}
Let $c\geq0$. We define the class $\K(\B_n, c)$ to be the set of all infinitesimal generators $G$ on $\B_n$ such that for all $z\in \B_n$ the following inequality holds:  \begin{equation}\label{bas} \|G(z)\|_{\B_n,z} \leq \frac{c}{u_{\B_n}(z)^2}. \end{equation}
\end{definition}

\begin{remark} $\K(\B_n, c)$ is a compact family: Montel's theorem and the definition of $\K(\B_n, c)$ immediately imply that it is a normal family. If a sequence $(G_n) \subset \K(\B_n, c)$ converges locally uniformly to $G: \B_n \to \C^n,$ then $G$ is holomorphic and also an infinitesimal generator which can be seen by using the characterization given in \cite{MR2578602}, Theorem 0.2. Of course, $G$ also satisfies \eqref{bas} and we conclude $G \in \K(\B_n, c).$
\end{remark}

Just as we passed from $\D$ to $\Ha$ in one dimension, we can pass from the unit ball $\B_n$ to the Siegel upper half-space $\Ha_n=\{(z_1,\tilde{z})\in\C^n \,|\, \Im(z_1)>\|\tilde{z}\|^2\}$
 in order to get simpler formulas:\\
The Cayley map $$C:\Ha_n \to \B_n, \quad C(z)=(C_1(z),...,C_n(z))=\left(\frac{z_1-i}{z_1+i},\frac{2z_2}{z_1+i},...,\frac{2z_n}{z_1+i}\right),$$
maps $\Ha_n$ biholomorphically onto $\B_n$. It extends to a homeomorphism from the one-point compactification $\widehat{\Ha}_n=\Ha_n \cup \partial \Ha_n\cup \{\infty\}$ of $\Ha_n \cup \partial \Ha_n$ to the closure of $\B^n$.\\
The pluricomplex Poisson kernel transforms as follows:
$$u_{\Ha_n}(z):=u_{\B_n}(C(z))=-\Im(z_1)+\|\tilde{z}\|^2.$$
Thus, we define the horosphere $E_{\Ha_n}(\infty, R)$ with center $\infty$ and radius $R>0$  by
$$E_{\Ha_n}(\infty, R):=\{z\in \Ha_n \,|\,\Im(z_1)-\|\tilde{z}\|^2>\frac1{R}\}.$$
For $v\in\C^n$ and $z\in \Ha_n$ we let $\|v\|_{\Ha_n, z}$ be the Kobayashi hyperbolic length of $v.$\\

Let $c\geq0$. We define the class $\K(\Ha_n, c)$ to be the set of all infinitesimal generators $H$ on $\Ha_n$ satisfying the inequality $$ \|H(z)\|_{\Ha_n,z} \leq \frac{c}{u_{\Ha_n}(z)^2} $$
for all $z\in \Ha_n.$ Then we have
$$\K(\B_n, c) = \{ C'(C^{-1}) \cdot (H \circ C^{-1}) \,|\, H \in \K(\Ha_n, c)\}.$$

From now on we will stay in the upper half-space $\Ha_n,$ where most of the computations we need take a simpler form.

\section{Slices}\label{sec3}

\subsection{Normalized geodesics and slices}
For any $H\in \text{Inf}(\Ha_n)$ one can consider one-dimensional slices by using the so called \emph{Lempert projection devices}; see \cite{MR3130328}, Section 3.\\
If $w\in \Ha_n,$ then there exists a unique complex  passing through $w$ and $\infty.$ Let us choose a parametrization 
$\varphi:\Ha \to \Ha_n$ of this geodesic. There exists a unique holomorphic map $P:\Ha_n\to \Ha_n$ with $P^2=P$ and $P\circ \varphi = \varphi.$ Define $\tilde{P}=\varphi^{-1}\circ P.$ Then  $$h_\varphi:\Ha\to\C, \qquad h_\varphi(\zeta)=d\tilde{P}(\varphi(\zeta))\cdot H(\varphi(\zeta)),$$

is an infinitesimal generator on $\Ha;$ see \cite{MR3130328}, p. 6. \\
We will need special parametrizations of these geodesics:
In \cite{MR2181760}, p. 516, it is shown that for any complex geodesic $\varphi:\Ha\to \Ha_n$ with $\varphi(\infty)=\infty,$ there exists $a_\varphi>0$ such that $$ u_{\Ha_n}(\varphi(\zeta))=a_{\varphi}\cdot u_{\Ha}(\zeta) $$
for all $\zeta \in \Ha.$ Call a geodesic $\varphi:\Ha\to \Ha_n$ \emph{normalized} if $\varphi(\infty)=\infty$ and $a_\varphi = 1.$

\begin{lemma} Let $a \in \C$ and $\gamma\in \C^{n-1}$ such that $(a, \gamma)\in \Ha_n.$ Then the map 
$$\label{geodesic}\varphi_\gamma:\Ha\to\Ha_n, \quad \varphi_{\gamma}(\zeta):=(\zeta+i \|\gamma\|^2, \gamma),$$
 is a normalized geodesic through $(a, \gamma).$ Furthermore, if $H=(H_1, \tilde{H})\in \text{Inf}(\Ha_n)$, then the slice $h_\gamma:=h_{\varphi_\gamma}$ of $H$ with respect to $\varphi_\gamma$ is given by 
\begin{equation}\label{slice} h_\gamma(\zeta) = H_1(\varphi_\gamma(\zeta))-2i\overline{\gamma}^T \cdot \tilde{H}(\varphi_\gamma(\zeta)).
\end{equation}
\end{lemma}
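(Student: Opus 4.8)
The plan is to treat the two assertions separately, extracting the Lempert projection device explicitly along the way. For the geometric claims about $\varphi_\gamma$ I would first check that it lands in $\Ha_n$: for $\zeta\in\Ha$ one has $\Im(\zeta+i\|\gamma\|^2)-\|\gamma\|^2=\Im(\zeta)>0$, so $\varphi_\gamma(\zeta)\in\Ha_n$. The same computation gives $u_{\Ha_n}(\varphi_\gamma(\zeta))=-\Im(\zeta)=u_\Ha(\zeta)$, so $a_{\varphi_\gamma}=1$ and $\varphi_\gamma$ is normalized, while $\varphi_\gamma(\infty)=\infty$ is immediate from the formula. To see that $\varphi_\gamma$ passes through $(a,\gamma)$, I solve $\zeta+i\|\gamma\|^2=a$: the hypothesis $(a,\gamma)\in\Ha_n$ means $\Im(a)>\|\gamma\|^2$, so $\zeta:=a-i\|\gamma\|^2$ satisfies $\Im(\zeta)>0$, i.e. $\zeta\in\Ha$, and indeed $\varphi_\gamma(\zeta)=(a,\gamma)$. (Note that $a$ does not enter the formula for $\varphi_\gamma$: the single geodesic over $\gamma$ passes through every admissible $(a,\gamma)$.)

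The heart of the argument is to produce the projection device. I would exhibit a holomorphic left inverse $p:\Ha_n\to\Ha$ of $\varphi_\gamma$; this simultaneously shows that $\varphi_\gamma$ is a complex geodesic — a holomorphic map admitting a holomorphic left inverse is a Kobayashi isometry, hence a geodesic — and supplies $\tilde P$. The natural candidate comes from the Heisenberg translation $T(z_1,\tilde z)=\bigl(z_1-2i\,\overline{\gamma}^T\cdot\tilde z+i\|\gamma\|^2,\ \tilde z-\gamma\bigr)$, an automorphism of $\Ha_n$ carrying the fiber $\{\tilde z=\gamma\}$ onto $\{\tilde z=0\}$; composing with the coordinate projection $\pi(z_1,\tilde z)=z_1$ yields $p(z_1,\tilde z)=z_1-2i\,\overline{\gamma}^T\cdot\tilde z+i\|\gamma\|^2$. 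One checks $p(\varphi_\gamma(\zeta))=\zeta$ directly using $\overline{\gamma}^T\cdot\gamma=\|\gamma\|^2$. Setting $P:=\varphi_\gamma\circ p$, the relations $P^2=P$ and $P\circ\varphi_\gamma=\varphi_\gamma$ follow at once from $p\circ\varphi_\gamma=\mathrm{id}$, so by the uniqueness in the projection device $P$ is the Lempert retraction and $\tilde P=\varphi_\gamma^{-1}\circ P=p$.

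Since $p$ is affine, its differential is the constant linear functional $d\tilde P=dp=(1,-2i\,\overline{\gamma}^T)$, and substituting into $h_\gamma(\zeta)=d\tilde P(\varphi_\gamma(\zeta))\cdot H(\varphi_\gamma(\zeta))$ with $H=(H_1,\tilde H)$ produces exactly \eqref{slice}. The one delicate point I expect is verifying that the candidate $p$ really maps $\Ha_n$ into $\Ha$; this is where the Heisenberg structure is essential, since writing $p=\pi\circ T$ reduces the claim to $\Im(z_1)>\|\tilde z\|^2$ by means of the identity $\|\tilde z-\gamma\|^2=\|\tilde z\|^2-2\Re(\overline{\gamma}^T\cdot\tilde z)+\|\gamma\|^2$ — a naive vertical projection $z_1-i\|\gamma\|^2$ would fail this positivity test. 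Once $p$ is in hand, the remaining steps are routine verifications.
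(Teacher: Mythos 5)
Your proof is correct, and it reaches the paper's formulas by a genuinely different route. The paper establishes that $\varphi_\gamma$ is a complex geodesic by quoting the known parametrization of geodesics through $e_1$ in $\B_n$ (from the Bracci--Shoikhet/Bracci--Patrizio references), transporting it through the Cayley map and reparametrizing; it then writes down $P$ explicitly and verifies by hand that it is holomorphic, idempotent, and maps $\Ha_n$ into $\varphi_\gamma(\Ha)$, the last point via a Cauchy--Schwarz estimate on $\Im\bigl(z_1-2i\overline{\gamma}^T\tilde{z}+2i\|\gamma\|^2\bigr)$. You instead prove geodesy from scratch by exhibiting the holomorphic left inverse $p(z_1,\tilde z)=z_1-2i\overline{\gamma}^T\tilde z+i\|\gamma\|^2$ (which is exactly the paper's $\tilde P$) and invoking the standard fact that a holomorphic map with a holomorphic left inverse is a Kobayashi isometry; the retraction $P=\varphi_\gamma\circ p$ then satisfies $P^2=P$ and $P\circ\varphi_\gamma=\varphi_\gamma$ formally, with no computation, and the only analytic point --- that $p$ maps $\Ha_n$ into $\Ha$ --- is disposed of by factoring $p$ through the Heisenberg translation, which preserves $u_{\Ha_n}$ exactly, so no estimate is needed. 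Your version buys self-containedness (no appeal to the classification of geodesics) and a cleaner verification of the projection device, and you also check explicitly that the geodesic passes through $(a,\gamma)$, which the paper leaves implicit; the paper's version buys the connection to the literature's description of all geodesics at $e_1$, which explains where the parametrization $\varphi_\gamma$ comes from rather than pulling $p$ out of a hat. One small point to be careful about: the uniqueness statement for $P$ as phrased in the paper ($P^2=P$, $P\circ\varphi=\varphi$) is only correct with the additional requirement that the image of $P$ equals $\varphi_\gamma(\Ha)$ (otherwise $P=\mathrm{id}$ would also qualify); your $P=\varphi_\gamma\circ p$ does have image exactly $\varphi_\gamma(\Ha)$, so your appeal to uniqueness of the Lempert retraction is legitimate, but you should state that property when you invoke it.
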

\begin{proof}
Let $\psi:\D\to\B_n$ be a complex geodesic with $\psi(1)=e_1.$ As a parametrization for $\psi$ one can choose (see Section 3 in \cite{MR3130328})
$\psi(\zeta) = (\alpha^2(\zeta-1)+1, \alpha(\zeta-1)\beta),$
where $\alpha>0$ and $\beta\in \C^{n-1}$ such that $\|\beta\|^2=1-\alpha^2.$ Then $C^{-1}(\psi(\zeta))=(i\frac{2+\alpha^2(\zeta-1)}{\alpha^2(1-\zeta)}, i\beta/\alpha)$ and 
$$\zeta \mapsto C^{-1}(\psi(C_1(\zeta)))=(-i+\frac{\zeta+i}{\alpha^2}, i\beta/\alpha)=(\frac{\zeta}{\alpha^2}+i
\frac{1-\alpha^2}{\alpha^2}, i\beta/\alpha)=(\frac{\zeta}{\alpha^2}+i
\left\|\frac{\beta}{\alpha}\right\|^2, i\beta/\alpha)$$
is a complex geodesic from $\Ha$ to $\Ha_n$. A reparametrization [$ \zeta/\alpha^2$  to $\zeta$] and setting $\gamma=i\beta/\alpha$ gives
 the geodesic
\begin{equation}\varphi_\gamma(\zeta)=(\zeta+i\|\gamma\|^2,\gamma).\end{equation}

This complex geodesic is normalized because it satisfies $\varphi_\gamma(\infty) = \infty$ and 
$$ u_{\Ha_n}(\varphi_\gamma(\zeta)) = \Im(\zeta + i\|\gamma\|^2) - \|\gamma\|^2 = \Im(\zeta)  = u_{\Ha}(\zeta).$$

The projection onto $\varphi_\gamma(\Ha)$ is given by

\begin{equation}\label{proexplicit}
P(z_1, \tilde{z})=(z_1-2i\overline{\gamma}^T \cdot \tilde{z}+2i\|\gamma\|^2, \gamma).
\end{equation}

Clearly, $P$ is holomorphic and maps $\Ha_n$ onto $\varphi_\gamma(\Ha)$ because
\begin{eqnarray*}
\Im(z_1-2i\overline{\gamma}^T \cdot \tilde{z}+2i\|\gamma\|^2)&=&\Im(z_1)-2\Im(i\overline{\gamma}^T \cdot \tilde{z})+2\|\gamma\|^2\\
&\geq&
\|\tilde{z}\|^2-2\|\gamma\|\|\tilde{z}\|+\|\gamma\|^2+\|\gamma\|^2=(\|\gamma\|-\|\tilde{z}\|)^2+\|\gamma\|^2\geq \|\gamma\|^2.
\end{eqnarray*}
Furthermore, 
$$(P\circ P)(z_1,\tilde{z})=(z_1-2i\overline{\gamma}^T \tilde{z}+2i\|\gamma\|^2-2i\overline{\gamma}^T \gamma+2i\|\gamma\|^2,\gamma)=
(z_1-2i\overline{\gamma}^T\tilde{z}+2i\|\gamma\|^2,\gamma)=P(z_1,\tilde{z}).$$
Thus, the inverse $\tilde{P}:\Ha_2\to \Ha, \tilde{P}=\varphi_\gamma^{-1}\circ P,$ is given by $\tilde{P}(z_1,\tilde{z})=(z_1-2i\overline{\gamma}^T\tilde{z}+i\|\gamma\|^2).$\\
If $H(z)=(H_1(z),\tilde{H}(z))$ is a generator on $\Ha_n,$ we get the slice reduction
$$h_{\varphi_\gamma}(\zeta)=d\tilde{P}(\varphi_\gamma(\zeta))\cdot H(\varphi_\gamma(\zeta))=H_1(\varphi_\gamma(\zeta))-2i\overline{\gamma}^T\cdot \tilde{H}(\varphi_\gamma(\zeta)).$$
\end{proof}

\subsection{Some explicit formulas}
Later on we will need explicit formulas of the Kobayashi norms of $dP(z)H(z)$ and $H(z)-dP(z)\cdot H(z).$ The following  lemma is proven in the Appendix.

\begin{lemma}\label{formulas0}
Let $a\in \C, p,v\in \C^{n-1}$ and $z=(z_1, \tilde{z})\in \Ha_n.$ Then the following formulas hold:
\begin{equation}\label{normproj00}\left\|\binom{a}{0}\right\|_{\Ha_n, z} = \frac{|a|}{|u_{\Ha_n}(z)|},
\end{equation}
\begin{equation}\label{normorth00}
\left\|\binom{2i \overline{p}^T v}{v}\right\|_{\Ha_n, z}= 2\frac{\sqrt{ \|v\|^2|u_{\Ha_n}(z)|+|\overline{(p-\tilde{z})}^T v|^2}}{|u_{\Ha_n}(z)|},
\end{equation}
\begin{equation}\label{orth00} \left\|\binom{a-2i \overline{\tilde{z}}^T v}{0} + \binom{2i \overline{\tilde{z}}^T v}{v}\right\|_{\Ha_n, z}^2=
 \left\|\binom{a-2i \overline{\tilde{z}}^T v}{0}\right\|_{\Ha_n, z}^2+
 \left\|\binom{2i \overline{\tilde{z}}^T v}{v}\right\|_{\Ha_n, z}^2 .\end{equation}
\end{lemma}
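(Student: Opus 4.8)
The plan is to reduce all three identities to a single explicit formula for the Kobayashi metric of $\Ha_n$, and then to read off \eqref{normproj00}--\eqref{orth00} by substitution. Writing $r:=|u_{\Ha_n}(z)|=\Im(z_1)-\|\tilde{z}\|^2>0$ and decomposing a tangent vector as $v=(v_1,\tilde{v})\in\C\times\C^{n-1}$, I claim that
\begin{equation}\label{master}
\|v\|_{\Ha_n,z}^2=\frac{|v_1-2i\,\overline{\tilde{z}}^T\tilde{v}|^2}{r^2}+\frac{4\|\tilde{v}\|^2}{r}.
\end{equation}
Once \eqref{master} is in hand, the first two formulas are immediate substitutions: setting $\tilde{v}=0$, $v_1=a$ kills the second summand and gives \eqref{normproj00}; setting $v_1=2i\,\overline{p}^Tv$ and $\tilde{v}=v$ turns the first numerator into $|2i\,\overline{(p-\tilde{z})}^Tv|^2=4|\overline{(p-\tilde{z})}^Tv|^2$, so that \eqref{master} reads $\tfrac{4}{r^2}\big(|\overline{(p-\tilde{z})}^Tv|^2+r\|v\|^2\big)$, which is exactly the square of the right-hand side of \eqref{normorth00} after using $r=|u_{\Ha_n}(z)|$.

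To establish \eqref{master} I would transport the classical Kobayashi metric of the ball through the Cayley map $C:\Ha_n\to\B_n$. With the normalization matching the curvature $-1$ disc metric $|v|_{\D,z}=2|v|/(1-|z|^2)$, one has the standard expression $\|w\|_{\B_n,\zeta}^2=\tfrac{4\,[(1-\|\zeta\|^2)\|w\|^2+|\overline{\zeta}^Tw|^2]}{(1-\|\zeta\|^2)^2}$, and since $C$ is biholomorphic, $\|v\|_{\Ha_n,z}=\|dC(z)\cdot v\|_{\B_n,C(z)}$. The computation then has three ingredients. Writing $s=z_1+i$, differentiating $C$ gives $(dC(z)v)_1=\tfrac{2i}{s^2}v_1$ and $(dC(z)v)_j=\tfrac{2}{s^2}(sv_j-z_jv_1)$ for $j\geq2$; next comes the key simplification $1-\|C(z)\|^2=4r/|s|^2$, which rests on $|z_1+i|^2-|z_1-i|^2=4\Im(z_1)$; finally one substitutes both into the ball formula and simplifies. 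One can check the consistency of \eqref{master} against the normalized slices of Lemma~\ref{geodesic}: along $\varphi_\gamma(\zeta)=(\zeta+i\|\gamma\|^2,\gamma)$ the tangent vector $\varphi_\gamma'=(1,0)$ has $\|(1,0)\|_{\Ha_n,\varphi_\gamma(\zeta)}=1/\Im(\zeta)=|1|_{\Ha,\zeta}$, as it must.

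For \eqref{orth00} I would avoid expanding anything and instead use that \eqref{master} is the squared length of a Hermitian form, with polarization $\langle v,v'\rangle=\tfrac{1}{r^2}(v_1-2i\overline{\tilde{z}}^T\tilde{v})\,\overline{(v_1'-2i\overline{\tilde{z}}^T\tilde{v}')}+\tfrac{4}{r}\,\overline{\tilde{v}'}^T\tilde{v}$. The two summands on the left of \eqref{orth00} are orthogonal for this form: the first vector $\binom{a-2i\overline{\tilde{z}}^Tv}{0}$ has vanishing lower block, while the second vector $\binom{2i\overline{\tilde{z}}^Tv}{v}$ has vanishing ``horizontal'' combination $v_1-2i\overline{\tilde{z}}^T\tilde{v}=2i\overline{\tilde{z}}^Tv-2i\overline{\tilde{z}}^Tv=0$; hence both terms in $\langle\cdot,\cdot\rangle$ drop out and the Pythagorean identity \eqref{orth00} follows with no further algebra. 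The only genuinely laborious point is the collapse in the preceding paragraph: one must track the conjugations hidden in $\overline{\zeta}^Tw$ and carry out the cancellations between the two summands of the ball formula. This is purely mechanical once $1-\|C(z)\|^2=4r/|s|^2$ is available, and I expect it to be the main (though routine) obstacle, which is presumably why the authors defer it to the Appendix.
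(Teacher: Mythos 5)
Your proposal is correct, and it ultimately rests on the same explicit object as the paper's appendix proof — the Bergman/Kobayashi metric of $\Ha_n$ transported from $\B_n$ by the Cayley map — but the way you extract the three identities is genuinely different and cleaner. The paper computes the full metric tensor $(g_{j,k})$ with $g_{j,k}=-4\,\partial^2\log(-u_{\Ha_n})/\partial z_j\partial\bar z_k$, lists all its entries, and then verifies \eqref{normproj00}, \eqref{normorth00}, \eqref{orth00} by three separate matrix--vector computations; in particular \eqref{orth00} is obtained by checking by hand that the cross term $(2i\overline{\tilde z}^Tv,v^T)\cdot(g_{j,k})\cdot\overline{(a-2i\overline{\tilde z}^Tv,0)}^T$ vanishes. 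You instead pass through the diagonalized closed form $\|v\|_{\Ha_n,z}^2=|v_1-2i\,\overline{\tilde z}^T\tilde v|^2/r^2+4\|\tilde v\|^2/r$, with $r=|u_{\Ha_n}(z)|$. That master formula is correct: expanding the square reproduces the paper's $\sum_{j,k}g_{j,k}v_j\overline{v_k}$ term by term (the mixed terms $\pm 2i\bar v_1\overline{\tilde z}^T\tilde v$ and the term $4|\overline{\tilde z}^T\tilde v|^2$ all match), so besides your Cayley-transport derivation — whose ingredients ($dC$, the identity $1-\|C(z)\|^2=4r/|z_1+i|^2$, the curvature $-1$ ball formula) are all correct, and whose final collapse does come out as claimed — you could also obtain it in two lines by completing the square in the paper's tensor. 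What the diagonalization buys is that the lemma becomes structural rather than computational: \eqref{normproj00} and \eqref{normorth00} are substitutions, and \eqref{orth00} is Pythagoras for the associated Hermitian form, because the functional $v\mapsto v_1-2i\overline{\tilde z}^T\tilde v$ annihilates $\binom{2i\overline{\tilde z}^Tv}{v}$ while the lower block of $\binom{a-2i\overline{\tilde z}^Tv}{0}$ is zero; this makes the orthogonal splitting \eqref{orthogonal}, used throughout Sections 3 and 4, conceptually evident rather than an accident of computation. The one thing to tidy up in a final write-up is to actually state and prove the master formula (by either route) rather than defer it as ``routine,'' since it carries the entire content of the lemma.
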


By using Lemma \ref{formulas0} we obtain the following explicit expressions.

\begin{lemma}\label{formulas} Let $H=(H_1, \tilde{H})\in \text{Inf}(\Ha_n)$ and fix $z\in \Ha_n$. Denote by $P$ the projection onto the complex geodesic through $z$ and $\infty$. Then the following formulas hold:
\begin{equation}\label{eq:7} dP(z)\cdot H(z) = (H_1(z)-2i\overline{\tilde{z}}^T\tilde{H}(z), 0), \quad \quad 
H(z)-dP(z)\cdot H(z) = (2i\overline{\tilde{z}}^T\tilde{H}(z), \tilde{H}(z)).
\end{equation}
Furthermore,
\begin{equation}\label{orthogonal} \|H(z)\|_{\Ha_n, z}^2=\|dP(z)\cdot H(z)\|_{\Ha_n, z}^2 + \|H(z)-dP(z)\cdot H(z)\|_{\Ha_n, z}^2, \end{equation}
\begin{equation}\label{normproj}\|dP(z)H(z)\|_{\Ha_n, z} = \frac{|H_1(z)-2i\overline{\tilde{z}}^T\tilde{H}(z)|}{|u_{\Ha_n}(z)|},\end{equation}
\begin{equation}\label{normorth}\|H(z)-dP(z)\cdot H(z)\|_{\Ha_n, z} = 2\frac{\|\tilde{H}(z)\|}{\sqrt{|u_{\Ha_n}(z)|}}.\end{equation}
\end{lemma}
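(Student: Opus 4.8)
The plan is to derive the two decomposition identities \eqref{eq:7} directly from the explicit description of the projection, and then to read off the three norm identities \eqref{orthogonal}, \eqref{normproj} and \eqref{normorth} by recognizing the two summands as instances of the vectors already treated in Lemma \ref{formulas0}. Since $P$ is the projection onto the complex geodesic through $z=(z_1,\tilde{z})$ and $\infty$, this geodesic is $\varphi_\gamma(\Ha)$ with $\gamma=\tilde{z}$, so that formula \eqref{proexplicit} gives
$$P(w_1,\tilde{w})=\bigl(w_1-2i\overline{\tilde{z}}^T\tilde{w}+2i\|\tilde{z}\|^2,\ \tilde{z}\bigr).$$

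First I would compute $dP(z)$. The key observation is that $P$ is complex-affine: its first component is $w\mapsto w_1-2i\overline{\tilde{z}}^T\tilde{w}$ plus a constant, and its second component is the constant $\tilde{z}$. Hence the differential is just the linear part,
$$dP(z)\cdot(v_1,\tilde{v})=\bigl(v_1-2i\overline{\tilde{z}}^T\tilde{v},\ 0\bigr),$$
independently of the base point. Applying this to $H(z)=(H_1(z),\tilde{H}(z))$ yields the first identity in \eqref{eq:7}, and subtracting from $H(z)$ gives the second. This is essentially the only place where one must be careful — recognizing that the constant second component of $P$ contributes nothing to $dP$, so that no base-point dependence survives.

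It then remains to evaluate the three norms, and here Lemma \ref{formulas0} does all the analytic work. The two vectors just produced are exactly of the forms treated there: writing $a=H_1(z)$ and $v=\tilde{H}(z)$, we have $dP(z)H(z)=\binom{a-2i\overline{\tilde{z}}^T v}{0}$ and $H(z)-dP(z)H(z)=\binom{2i\overline{\tilde{z}}^T v}{v}$, whose sum is $H(z)$. Thus \eqref{orth00} immediately gives the Pythagorean identity \eqref{orthogonal}, and \eqref{normproj} follows from \eqref{normproj00} applied to the scalar $a-2i\overline{\tilde{z}}^T v$. Finally, for \eqref{normorth} I would invoke \eqref{normorth00} with the choice $p=\tilde{z}$; this is the genuinely useful point, since then $\overline{(p-\tilde{z})}^T v=0$ and the second term under the square root drops out, leaving $2\|v\|\sqrt{|u_{\Ha_n}(z)|}/|u_{\Ha_n}(z)|=2\|\tilde{H}(z)\|/\sqrt{|u_{\Ha_n}(z)|}$. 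Beyond correctly matching the summands to the templates of Lemma \ref{formulas0} and observing the cancellation induced by $p=\tilde{z}$, there is no substantial obstacle.
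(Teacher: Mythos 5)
Your proposal is correct and follows the same route as the paper: derive \eqref{eq:7} from the explicit projection formula \eqref{proexplicit} (with $\gamma=\tilde{z}$, so $dP$ is just the constant linear part of the affine map $P$), then obtain \eqref{orthogonal}, \eqref{normproj} and \eqref{normorth} from Lemma \ref{formulas0} with exactly the substitutions $a=H_1(z)$, $v=\tilde{H}(z)$ and $p=\tilde{z}$ that the paper uses. The only difference is that you spell out the computation of $dP(z)$, which the paper leaves implicit.
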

\begin{proof}
The formulas for $dP(z)H(z)$ and $H(z)-dP(z)H(z)$ follow from the explicit form \eqref{proexplicit}.\\
Equation \eqref{orthogonal} follows from \eqref{orth00} with $a=H_1(z)$ and
$v=\tilde{H}(z).$\\
Furthermore, equation \eqref{normproj} follows directly from \eqref{normproj00} with $a=H_1(z)-2i\overline{\tilde{z}}^T\tilde{H}(z)$ and equation \eqref{normorth} from \eqref{normorth00} by setting $p=\tilde{z}$ and $v=\tilde{H}.$
\end{proof}

\subsection{Slices of generators in \texorpdfstring{$\K(\Ha_n, c)$}{} and examples}

\begin{proposition}\label{subset} Let $c\geq 0$ and $H \in \K(\Ha_n, c).$ Then every normalized slice $h_\gamma$ of $H$ belongs to $\K(\Ha, c).$
\end{proposition}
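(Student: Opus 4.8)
The plan is to prove that the normalized slice $h_\gamma$ of $H \in \K(\Ha_n, c)$ lies in $\K(\Ha, c)$ by verifying the one-dimensional characterization from Proposition \ref{observe}, namely that $h_\gamma \in \III$ and $|h_\gamma(\zeta)|_{\Ha,\zeta} \leq c \cdot R_\Ha(\zeta)^2 = c/u_\Ha(\zeta)^2$ for all $\zeta \in \Ha$. The first part is immediate: it is stated in the excerpt (following \cite{MR3130328}) that the slice $h_\varphi$ of any $H \in \text{Inf}(\Ha_n)$ is an infinitesimal generator on $\Ha$, so $h_\gamma \in \III$ for free. The real content is the length inequality, and the key idea is that the slice $h_\gamma(\zeta)$ is exactly the coordinate expression of the projected vector $dP(\varphi_\gamma(\zeta)) \cdot H(\varphi_\gamma(\zeta))$.

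First I would set $z = \varphi_\gamma(\zeta) = (\zeta + i\|\gamma\|^2, \gamma)$ and observe from \eqref{slice} and \eqref{eq:7} that $h_\gamma(\zeta) = H_1(z) - 2i\overline{\gamma}^T\tilde{H}(z)$, which is precisely the first coordinate of $dP(z) \cdot H(z)$ (here $\tilde{z} = \gamma$). The link between the one-dimensional hyperbolic length of the slice and the $n$-dimensional Kobayashi length of the projection is the identity already recorded before Proposition \ref{observe}, which for normalized geodesics should give $|h_\gamma(\zeta)|_{\Ha,\zeta} = \|dP(z)\cdot H(z)\|_{\Ha_n, z}$; indeed $\tilde{P}$ is an isometry from the geodesic onto $\Ha$, and normalization ($a_{\varphi_\gamma}=1$) ensures the base point radii match, $u_\Ha(\zeta) = u_{\Ha_n}(z)$. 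Concretely, formula \eqref{normproj} gives $\|dP(z)\cdot H(z)\|_{\Ha_n, z} = |H_1(z) - 2i\overline{\tilde{z}}^T\tilde{H}(z)|/|u_{\Ha_n}(z)| = |h_\gamma(\zeta)|/|u_{\Ha_n}(z)|$, while $|h_\gamma(\zeta)|_{\Ha,\zeta} = |h_\gamma(\zeta)|/\Im(\zeta) = |h_\gamma(\zeta)|/u_\Ha(\zeta)$, and normalization equates the two denominators.

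The final step is the chain of inequalities. By the orthogonal decomposition \eqref{orthogonal}, the projected component never exceeds the whole: $\|dP(z)\cdot H(z)\|_{\Ha_n, z} \leq \|H(z)\|_{\Ha_n, z}$. Since $H \in \K(\Ha_n, c)$, the defining inequality \eqref{bas} (in half-space form) gives $\|H(z)\|_{\Ha_n, z} \leq c/u_{\Ha_n}(z)^2$. Combining, and using $u_{\Ha_n}(z) = u_\Ha(\zeta)$ from normalization, I obtain $|h_\gamma(\zeta)|_{\Ha,\zeta} = \|dP(z)\cdot H(z)\|_{\Ha_n, z} \leq \|H(z)\|_{\Ha_n,z} \leq c/u_{\Ha_n}(z)^2 = c/u_\Ha(\zeta)^2 = c\cdot R_\Ha(\zeta)^2$, which is exactly the condition placing $h_\gamma$ in $\K(\Ha, c)$.

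I expect the main obstacle to be the isometry identity $|h_\gamma(\zeta)|_{\Ha,\zeta} = \|dP(z)\cdot H(z)\|_{\Ha_n, z}$ rather than the inequalities, which are essentially bookkeeping. One must check carefully that the hyperbolic length computed on the one-dimensional slice via $d\tilde{P}$ agrees with the Kobayashi length of the projected vector in $\Ha_n$; this is the content of $\tilde{P}$ being a holomorphic retraction whose restriction to the geodesic is a biholomorphic isometry, and it is where the normalization condition $a_{\varphi_\gamma}=1$ is genuinely used to align the two Poisson kernels $u_\Ha(\zeta) = u_{\Ha_n}(\varphi_\gamma(\zeta))$. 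Once that identity and the base-point normalization are in hand, everything collapses to \eqref{orthogonal}, \eqref{normproj}, and \eqref{bas}.
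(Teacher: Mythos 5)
Your proposal is correct and follows essentially the same route as the paper's proof: decompose $H(z)$ at $z=\varphi_\gamma(\zeta)$ via the projection $dP(z)$, use the orthogonality relation \eqref{orthogonal} together with the defining bound \eqref{bas} to get $\|dP(z)\cdot H(z)\|_{\Ha_n,z}\leq c/u_{\Ha_n}(z)^2$, identify that projected component with the slice $h_\gamma$, and invoke the normalization $u_{\Ha_n}(\varphi_\gamma(\zeta))=u_\Ha(\zeta)$ to land in $\K(\Ha,c)$. The only cosmetic difference is that you justify the identity $|h_\gamma(\zeta)|_{\Ha,\zeta}=\|dP(z)\cdot H(z)\|_{\Ha_n,z}$ by the explicit formula \eqref{normproj}, while the paper cites the abstract fact that $\varphi_\gamma$, being a complex geodesic, is a Kobayashi isometry; both arguments rest on Lemma \ref{formulas} and the normalization, so this is a matter of presentation rather than substance.
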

\begin{proof}
Fix $\gamma\in \C^{n-1}$ and $\zeta \in \Ha$ and let $z=\varphi_\gamma(\zeta).$\\
Furthermore, let $P$ be the projection onto $\varphi_\gamma(\Ha)$. Now we write $H(z)$ as $$H(z)=dP(z)\cdot H(z) + (H(z)-dP(z)H(z)).$$
As $H\in \K(\Ha_n, c)$, equation \eqref{orthogonal} implies
$$ \|H(z)\|_{\Ha_n, z}^2=\|dP(z)\cdot H(z)\|_{\Ha_n, z}^2 + \|H(z)-dP(z)H(z)\|_{\Ha_n, z}^2 \leq   \frac{c^2}{ u_{\Ha_n}(z)^4}.$$
In particular,
\begin{equation}\label{eq1}  \|dP(z)\cdot H(z)\|_{\Ha_n, z} \leq \frac{c}{u_{\Ha_n}(z)^2}.\end{equation}

By the definition of the slice $h_\gamma$ we have
$$ dP(\varphi_\gamma(\zeta)) \cdot H(\varphi_\gamma(\zeta)) = (d\varphi_\gamma)(\zeta)\cdot h_\gamma(\zeta)$$
and consequently $$\|dP(\varphi_\gamma(\zeta)) \cdot H(\varphi_\gamma(z))\|_{\Ha_n, \varphi_\gamma(\zeta)} = \|(d\varphi_\gamma)(\zeta)\cdot h_\gamma(\zeta)\|_{\Ha_n, \varphi_\gamma(\zeta)}=|h_\gamma(\zeta)|_{\Ha, \zeta}.$$ The last equality holds as $\varphi_\gamma$ is a complex geodesic. Equation \eqref{eq1} implies $$ |h_\gamma(\zeta)|_{\Ha, \zeta} \leq  \frac{c}{u_{\Ha_n}(\varphi_\gamma(\zeta))^2}=\frac{c}{u_{\Ha}(\zeta)^2},$$
where the last equality holds as $\varphi_\gamma$ is normalized. Hence, $h_\gamma \in \K(\Ha, c).$ 
\end{proof}

\begin{remark}\label{rem2} If two holomorphic functions $H_1, H_2:\Ha_n\to\C^n$ have the same slices, i.e. $dP(z)H_1(z)=dP(z)H_2(z)$ for all $z\in \Ha_n,$ then $H_1=H_2$; see the proof of Theorem 3.2 in \cite{MR2811960}.
\end{remark}

\begin{example}
The family $\{\Phi_t(z)=(z_1, e^{-it/z_1}z_2)\}_{t\geq0}$ is a semigroup on $\Ha_2.$ Its generator $H$ is given by 
 
$$H(z_1,z_2)=(0, -i\frac{z_2}{z_1}).$$ Thus, for $\gamma\in\C$ the slice $h_\gamma$ has the form 
$$h_\gamma(z)=-2i\overline{\gamma}\cdot -i\frac{\gamma}{z+i|\gamma|^2}=
\frac{-2|\gamma|^2}{z+i|\gamma|^2}.$$
Consequently, the limit $\lim_{y\to\infty} y\cdot |h(iy)|= 2 |\gamma|^2$ exists, but does not have an upper bound that is independent of $\gamma.$  Proposition \ref{subset} implies that for any $c\geq 0,$ $H\not\in \K(\Ha_2, c)$. \hfill $\bigstar$
\end{example}

\begin{example}
Let
$$H:\Ha_2\to\C^2, \qquad H(z_1, z_2) = \binom{ \frac{-1}{z_1}} {\frac{z_2}{2 z_1^2}}.$$
For $\gamma \in \C$ the slice $h_\gamma$ is given by
$$ h_\gamma(\zeta) =  \frac{-1}{\zeta +i|\gamma|^2}  - 2i\overline{\gamma} \cdot \frac{\gamma}{2(\zeta + i|\gamma|^2)^2}=
\frac{-\zeta -2i|\gamma|^2}{(\zeta +i|\gamma|^2)^2}=\frac{(-\zeta -2i|\gamma|^2)(\overline{\zeta}^2 -2i|\gamma|^2\overline{\zeta} -|\gamma|^4 )}{|\zeta +i|\gamma|^2|^4}. $$
Let us write $\zeta=x+iy;$ $x\in\R ,y\in (0,\infty).$ Then a small calculation gives
$$ \Im(h_\gamma(\zeta)) =\frac{y(x^2+y^2) +4y^2|\gamma|^2  +5y|\gamma|^4+2|\gamma|^6}{|\zeta +i|\gamma|^2|^4}>0.$$
 Furthermore, $$\limsup_{y\to \infty} y |h_{\gamma}(iy)|=1.$$
Hence, $h_\gamma \in \K(\Ha, 1).$ So each slice is an infinitesimal generator in $\Ha$ and by \cite{MR3130328}, Proposition 3.8, the function $H$ is an infinitesimal generator in $\Ha_2.$\\ 
Now let $(z_1, z_2)\in \Ha_2$ and write $z_1=x+iy,$ $x,y\in\R.$ Then we get (an explicit formula of the Kobayashi
metric is given in the appendix)
$$  u_{\Ha_2}(z)^4 \cdot \|H(z)\|_{\Ha_2, z}^2 = (y-|z_2|^2)^2  \cdot \frac {x^2+y^2+3 |z_2|^2 y}{(x^2+y^2)^2}\underset{y\geq |z_2|^2}{\leq}
y^2 \cdot \frac {x^2+y^2+3 y^2}{(x^2+y^2)^2}\leq\frac {x^2+4y^2}{x^2+y^2}\leq 4.$$
Consequently, $H \in \K(\Ha_2, 2).$ \hfill $\bigstar$
\end{example}

\begin{question} Let $H:\Ha_n \to \C^n$ be an infinitesimal generator. Assume there exists $c\geq 0$ such that 
$h_\gamma \in \K(\Ha, c)$ for every $\gamma \in \C^{n-1}.$ Does this imply that $H\in \K(\Ha_n, C)$ for some $C\geq c$?
\end{question}

 \section{Univalent functions with hydrodynamic normalization}\label{sec4}

Motivated by Remark \ref{classP} we define the following generalization of the class $\P,$ where $id$ stands for the identity mapping on $\Ha_n.$

\begin{definition}
$$\P_n:=\{f:\Ha_n\to\Ha_n \,|\, \text{$f$ is univalent and $f-id\in \K(\Ha_n, c)$ for some $c\geq 0$}\}.$$
\end{definition}

\begin{remark} \label{autoinf} It is important to note that if $f:\Ha_n \to \Ha_n$ is a holomorphic self-mapping, then the map $f-id$ is automatically an infinitesimal generator; see \cite{MR2022955}, p. 207.    
\end{remark}

\subsection{Basic properties of \texorpdfstring{$\P_n$}{}}

The following proposition summarizes some basic properties of $\P_n.$

\begin{proposition}\label{basics}${}$
\begin{itemize} 
\item[a)] $\P_n$ contains no automorphism of $\Ha_n$ except the identity.
 		\item[b)] Let $\alpha:\Ha_n \to \Ha_n$ be an automorphism of $\Ha_n$ with $\alpha(\infty)=\infty.$ If $f\in \P_n$, then $\alpha^{-1}\circ f\circ \alpha\in \P_n$.
	\item[c)] Let $f\in\P_n.$ Then $f(E_{\Ha_n}(\infty, R)) \subset E_{\Ha_n}(\infty, R)$ for every $R>0.$
	\item[d)] Let $f\in \P_n$ and write $f(z)=z+H(z)$ with $H=(H_1, \tilde{H})\in \K(\Ha_n,c).$ Then \begin{equation}\label{eq3}
\text{$\|\tilde{H}(z)\|^2 \leq |H_1(z)-2i\overline{\tilde{z}}^T \tilde{H}|$ for all $z=(z_1, \tilde{z})\in \Ha_n.$}
\end{equation}
	\item[e)] Let $f\in\P_n.$ Then there exists $R>0$ such that $E_{\Ha_n}(\infty, R)\subset f(\Ha_n).$
\end{itemize}
\end{proposition}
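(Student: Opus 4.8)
The plan is to treat the five items in the logical order b), c), d), a), e), since c) is the analytic heart and the others feed into it or follow from it. Throughout I write $f=\mathrm{id}+H$ with $H=(H_1,\tilde H)\in\K(\Ha_n,c)$, set $E_R:=E_{\Ha_n}(\infty,R)$, and use the ``height'' $\rho(z):=-u_{\Ha_n}(z)=\Im(z_1)-\|\tilde z\|^2>0$, so that $E_R=\{\rho>1/R\}$ and the inclusion $f(E_R)\subseteq E_R$ for all $R$ is equivalent to $\rho(f(z))\ge\rho(z)$ for all $z$. For b) I would use that every automorphism $\alpha$ of $\Ha_n$ fixing $\infty$ is \emph{affine} (a composition of a Heisenberg translation, a unitary rotation in $\tilde z$, and a dilation $\Lambda_\delta(z_1,\tilde z)=(\delta^2 z_1,\delta\tilde z)$) and scales the Poisson kernel, $u_{\Ha_n}\circ\alpha=\lambda_\alpha\,u_{\Ha_n}$ with $\lambda_\alpha>0$. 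Affineness gives $\widehat H:=(\alpha^{-1}\circ f\circ\alpha)-\mathrm{id}=d\alpha^{-1}\circ H\circ\alpha$ exactly, and since $\alpha^{-1}$ is a Kobayashi isometry, $\|\widehat H(z)\|_{\Ha_n,z}=\|H(\alpha z)\|_{\Ha_n,\alpha z}\le c/u_{\Ha_n}(\alpha z)^2=(c/\lambda_\alpha^2)/u_{\Ha_n}(z)^2$. As $\alpha^{-1}\circ f\circ\alpha$ is again a univalent self-map, its displacement is automatically a generator (Remark \ref{autoinf}), so $\alpha^{-1}\circ f\circ\alpha\in\P_n$.

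The crux is c). A direct computation gives $\rho(f(z))-\rho(z)=\Im\bigl(H_1-2i\overline{\tilde z}^T\tilde H\bigr)-\|\tilde H\|^2=\Im h_\gamma(\zeta)-\|\tilde H(z)\|^2$, where $h_\gamma$ is the slice of $H$ through $z=\varphi_\gamma(\zeta)$. Proposition \ref{subset} only yields $\Im h_\gamma\ge0$, and the self-map condition $\rho(f(z))>0$ only yields $\Im h_\gamma-\|\tilde H\|^2>-\rho(z)$; neither suffices, since the genuinely higher-dimensional quadratic term $-\|\tilde H\|^2$ must still be absorbed. I expect this to be the main obstacle, and I would resolve it with a global tool rather than a pointwise estimate, namely Julia's lemma for holomorphic self-maps of $\B_n$ (see \cite{MR2022955}), transported to $\Ha_n$ via the Cayley map. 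With boundary dilation coefficient $\beta=\liminf_{z\to\infty}\rho(z)/\rho(f(z))$ at $\infty$, Julia's lemma gives $f(E_R)\subseteq E_{\beta R}$ and $f(\infty)=\infty$. It then remains to check $\beta\le1$: evaluating along the axis $z=(iy,0)$ and reading off from Lemma \ref{formulas} and the definition of $\K(\Ha_n,c)$ the decay $|H_1(iy,0)|=\LandauO(1/y)$ and $\|\tilde H(iy,0)\|=\LandauO(y^{-3/2})$, one gets $\rho(f(iy,0))/\rho(iy,0)=1+\LandauO(1/y^2)\to1$, whence $\beta\le1$ and $f(E_R)\subseteq E_{\beta R}\subseteq E_R$.

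Part d) is then immediate from c): the inequality $\Im h_\gamma(\zeta)\ge\|\tilde H(z)\|^2$ gives $|H_1-2i\overline{\tilde z}^T\tilde H|=|h_\gamma(\zeta)|\ge\Im h_\gamma(\zeta)\ge\|\tilde H(z)\|^2$. For a) I would first use c) to pin the fixed point: an automorphism $\alpha\in\P_n$ extends to a homeomorphism of $\widehat\Ha_n$ and satisfies $\alpha(E_R)\subseteq E_R$ with $\bigcap_R\overline{E_R}=\{\infty\}$, so $\alpha(\infty)=\infty$ (an automorphism sending $\infty$ to a finite boundary point would map an $\infty$-horosphere onto a horosphere containing points of height tending to $0$, contradicting $\rho\ge1/R$ on $E_R$). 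Using the affine normal form from b), I would then substitute $\alpha$ into the $\K(\Ha_n,c)$ inequality along $z=(iy,0)$ as $y\to\infty$ (killing the translation, dilation and Heisenberg parts) and at a point with $\tilde z\neq0$ (killing the rotation), concluding $\alpha=\mathrm{id}$.

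Finally, for e) I would argue by contradiction using b). If no horosphere lies in $f(\Ha_n)$, choose $w_R\notin f(\Ha_n)$ with $\rho(w_R)\to\infty$, and pick $\infty$-fixing automorphisms $\alpha_R$ (a Heisenberg translation composed with a dilation of factor $\delta_R^2\sim\rho(w_R)$) normalizing $w_R$ to the fixed point $(i,0)$. By b), $f_R:=\alpha_R^{-1}\circ f\circ\alpha_R\in\P_n$ with constant $c_R=c/\delta_R^{4}\to0$, so $f_R\to\mathrm{id}$ locally uniformly; but $(i,0)\notin f_R(\Ha_n)$, contradicting a degree argument (for large $R$ the local degree of $f_R$ at $(i,0)$ equals that of $\mathrm{id}$, namely $1$, so $(i,0)\in f_R(\Ha_n)$). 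Hence some $E_R\subseteq f(\Ha_n)$. The two places where the definition of $\K(\Ha_n,c)$ does the real work are the axial decay estimates feeding the computation of $\beta$ in c) and the local-uniform convergence $f_R\to\mathrm{id}$ in e).
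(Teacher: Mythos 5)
Your proposal is correct, and for parts b), c), d) it is essentially the paper's own argument written in half-space coordinates: the paper also proves c) by Julia's lemma (Theorem 2.2.21 in \cite{MR1098711}), checking along the radial sequence $z_n=(1-1/n,0)$ in $\B_n$ --- the Cayley image of your axis $(iy,0)$ --- that $e_1$ is a boundary regular fixed point with dilatation coefficient at most $1$, and it derives d) from c) by exactly your computation $\Im(H_1-2i\overline{\tilde{z}}^T\tilde{H})\geq\|\tilde{H}\|^2$; for a) and b) the paper merely cites the explicit form of the automorphisms of $\Ha_n$, with which your fuller sketch of a) is consistent. The genuine divergence is e). The paper's proof is quantitative and stays inside Kobayashi geometry: for $z\in\partial E_{\Ha_n}(\infty,1)$ it splits $f(z)=w+v$ with $w=z+dP(z)H(z)$ and $v=H(z)-dP(z)H(z)$, shows via Lemma \ref{formulas} that $v$ lies in the complex tangent space to the horosphere boundary through $w$, and uses convexity of horospheres together with the slice bound $|h_\gamma(\zeta)|\leq c/\Im(\zeta)$ to conclude $|u_{\Ha_n}(f(z))|\leq 1+c$ on $f(\partial E_{\Ha_n}(\infty,1))$; this exhibits an explicit horosphere, of height $1+c$, inside $f(\Ha_n)$. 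Your argument --- rescaling by automorphisms $\alpha_R$ fixing $\infty$ so that omitted points $w_R\notin f(\Ha_n)$ with $-u_{\Ha_n}(w_R)=:\rho_R\to\infty$ become $(i,0)$, using the constant tracking in your proof of b) to get $c_R=c/\rho_R^2\to0$, hence $f_R\to\mathrm{id}$ locally uniformly, and finishing with a Rouch\'e/degree argument --- is correct but soft: it yields \emph{some} horosphere in the image with no control of its size in terms of $c$, in exchange for bypassing the projection and convexity computations entirely. Note that both routes end with a topological step of the same nature (the paper's reduction ``the statement is proven if $u_{\Ha_n}$ is bounded on $f(\partial E_{\Ha_n}(\infty,1))$'' implicitly uses the open-mapping/connectedness argument that you make explicit with degree theory), so the real saving of your approach is replacing the explicit Kobayashi estimates by the rescaling trick, and the real cost is the loss of the explicit radius.
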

\begin{proof}
The statements a) and b) can easily be shown by using the explicit form of automorphisms of $\Ha_n;$ see Proposition 2.2.4 in \cite{MR1098711}.\\
The   statement c) is just Julia's lemma: Write $f(z)=z+H(z)$ and let us pass to the unit ball and define $\tilde{f}:\B_n\to \B_n, \tilde{f} = C\circ f\circ C^{-1}.$ Then  

$$ \tilde{f}=\frac1{2i+H_1(C^{-1}(z))-z_1H_1(C^{-1}(z))}\left[\binom{(1-z_1)H_1(C^{-1}(z))}{2(1-z_1)\tilde{H}(C^{-1}(z))}+2i z\right]. $$ 
By taking the sequence $z_n=(1-1/n, 0)$ it is easy to see that

$$\text{$\lim_{n\to \infty}\tilde{f}(z_n)=e_1$ \quad and \quad $\lim_{n\to \infty} \frac{1-\|\tilde{f}(z_n)\|}{1-\|z_n\|}=1$,}$$
i.e. $e_1$ is a boundary regular fixed point of $\tilde{f}$ with boundary dilatation coefficient $\leq 1.$ Julia's lemma (see Theorem
2.2.21 in \cite{MR1098711}) implies that $\tilde{f}(E_{\B_n}(e_1, R)) \subset E_{\B_n}(e_1, R)$ for any $R>0.$\\
Inequality d) follows directly from c): 
Let $z=(z_1, \tilde{z})\in \Ha_n$. Another formulation of c) is $-u_{\Ha_n}(z+H(z))\geq -u_{\Ha_n}(z),$ or more explicitly
\begin{eqnarray*}
&&\Im(z_1)+\Im(H_1(z))-\|\tilde{z}+\tilde{H}(z)\|^2 \geq \Im(z_1)-\|\tilde{z}\|^2 \\
&\iff& \Im(H_1(z))\geq\|\tilde{z}+\tilde{H}(z)\|^2 -\|\tilde{z}\|^2 = 2\Re(\overline{\tilde{z}}^T \tilde{H}(z)) + \|\tilde{H}(z)\|^2 \\
&\iff& \Im(H_1(z) - 2i \overline{\tilde{z}}^T \tilde{H}(z)) \geq \|\tilde{H}(z)\|^2.
 \end{eqnarray*}

From this inequality it follows that $\|\tilde{H}(z)\|^2 \leq |H_1(z)-2i\overline{\tilde{z}}^T \tilde{H}|$ for all $z\in \Ha_n.$\\

Finally we prove e):\\
Let $f\in \P_n$ and write $f(z)=z+H(z)$ with $H\in \K(\Ha_n,c).$ Because of c) $f$ maps the horosphere $E_{\Ha_n}(\infty,1)$ into itself. Hence the statement is proven if we can show that $u_{\Ha_n}$ is bounded on $f(\partial E_{\Ha_n}(\infty, 1))$.\\ 
Let $z\in \Ha_n$ with $z\in \partial E_{\Ha_n}(\infty, 1)$, i.e. $|u_{\Ha_n}(z)|=1.$ Furthermore, we choose $\zeta\in \Ha$ and $\gamma\in\C$ such that $\varphi_\gamma(\zeta)=z.$ Note that this implies $|u_{\Ha}(\zeta)|=\Im(\zeta)=1.$ \\
 Let $P$ be the projection onto $\varphi_\gamma(\Ha).$\\

 Then we have $|u_{\Ha_n}(f(z))|=|u_{\Ha_n}(z+H(z))|=|u_{\Ha_n}(\underbrace{z+dP(z)H(z)}_{=:w}+\underbrace{H(z)-dP(z)H(z)}_{=:v})|.$
As $dP(z) \cdot dP(z) = dP(z),$ we have $dP(z) \cdot v =0.$ A small calculation (see also Lemma 3.1 in  \cite{MR2811960}) gives
 $v\in T^\C_z \partial E_{\Ha_n}(\infty, 1).$ Furthermore, also $w \in \varphi_\gamma(\Ha)$ and $dP(z)=dP(w)$ and we get $v\in T^\C_w \partial E_{\Ha_n}(\infty, |u_{\Ha_n}(w)|^{-1}).$ As $E_{\Ha_n}(\infty, |u_{\Ha_n}(w)|^{-1})=\{z\in\Ha_n \,|\, |u_{\Ha_n}(z)|>|u_{\Ha_n}(w)|\}$ is convex this implies
\begin{eqnarray*}&&|u_{\Ha_n}(w+v)| \leq |u_{\Ha_n}(w)|  = |u_{\Ha_n}(z+dP(z)H(z))| \underset{\text{Lemma \ref{formulas}}}{=} |u_{\Ha_n}(z+(h_\gamma(\zeta),0))| \\
&=& \Im(z_1)-\|\tilde{z}\|^2+\Im(h_\gamma(\zeta)) \leq \Im(z_1)-\|\tilde{z}\|^2+|h_\gamma(\zeta)|\\
&=& |u_{\Ha_n}(z)|+|h_\gamma(\zeta)|=1+|h_\gamma(\zeta)|\leq 1+\frac{c}{\Im(\zeta)}=1+c.\end{eqnarray*}

Consequently, $f(\Ha_n) \supset f(E_{\Ha_n}(\infty,1))\supset E_{\Ha_n}(\infty,1+c).$
\end{proof}

\begin{theorem}\label{semigroup} $\P_n$ is a semigroup: If $f,g\in\P_n,$ then $f\circ g \in \P_n.$
\end{theorem}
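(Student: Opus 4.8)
The plan is to set $H := f - id$ and $K := g - id$, so that $H \in \K(\Ha_n, c_1)$ and $K \in \K(\Ha_n, c_2)$ for some $c_1, c_2 \geq 0$. Since $f$ and $g$ are univalent self-maps of $\Ha_n$, so is $f \circ g$, and by Remark \ref{autoinf} the difference $f\circ g - id$ is automatically an infinitesimal generator. Hence it suffices to produce a constant $c' \geq 0$ for which $f\circ g - id$ satisfies the defining inequality \eqref{bas}. I would start from the identity $(f\circ g)(z) - z = H(g(z)) + K(z)$ and apply the triangle inequality for the Kobayashi norm at the base point $z$,
\[
\|(f\circ g)(z) - z\|_{\Ha_n, z} \leq \|H(g(z))\|_{\Ha_n, z} + \|K(z)\|_{\Ha_n, z}.
\]
The second summand is harmless: since $K \in \K(\Ha_n, c_2)$ it is at most $c_2/u_{\Ha_n}(z)^2$.

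The work is in the first summand, and here lies the main obstacle: $H(g(z))$ is the value of a vector field at $w := g(z)$, so the hypothesis controls $\|H(w)\|_{\Ha_n, w}$, whereas I need its length measured at the \emph{different} base point $z$. To bridge this I would apply the orthogonal decomposition of Lemma \ref{formulas0}, valid for an arbitrary vector $V = (V_1, \tilde{V})$ at base $z$, in the form
\[
\|V\|_{\Ha_n, z}^2 = \frac{|V_1 - 2i\overline{\tilde{z}}^T \tilde{V}|^2}{u_{\Ha_n}(z)^2} + \frac{4\|\tilde{V}\|^2}{|u_{\Ha_n}(z)|},
\]
taken with $V = H(w)$. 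Writing the same decomposition of $H(w)$ at its own base point $w$ and using $H \in \K(\Ha_n, c_1)$ yields the two scalar bounds $|H_1(w) - 2i\overline{\tilde{w}}^T\tilde{H}(w)| \leq c_1/|u_{\Ha_n}(w)|$ and $\|\tilde{H}(w)\| \leq c_1/(2|u_{\Ha_n}(w)|^{3/2})$. Property c) of Proposition \ref{basics} supplies the crucial monotonicity $|u_{\Ha_n}(w)| \geq |u_{\Ha_n}(z)|$, which lets me replace $|u_{\Ha_n}(w)|$ by $|u_{\Ha_n}(z)|$ in the denominators. With this, the tangential term $4\|\tilde{H}(w)\|^2/|u_{\Ha_n}(z)|$ is already of order $u_{\Ha_n}(z)^{-4}$.

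For the first term of the decomposition I must compare $H_1(w) - 2i\overline{\tilde{z}}^T\tilde{H}(w)$ with $H_1(w) - 2i\overline{\tilde{w}}^T\tilde{H}(w)$; their difference is $2i(\overline{\tilde{w} - \tilde{z}})^T\tilde{H}(w)$, a cross term bounded by $2\|\tilde{K}(z)\|\,\|\tilde{H}(w)\|$ since $\tilde{w} - \tilde{z} = \tilde{K}(z)$. The delicate point, and the heart of the proof, is that the \emph{metric} bounds $\|\tilde{K}(z)\|,\|\tilde{H}(w)\| \sim |u|^{-3/2}$ make this cross term of order $|u_{\Ha_n}(z)|^{-3}$, which is too large near the finite boundary to survive division by $u_{\Ha_n}(z)^2$. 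This is exactly where property d) of Proposition \ref{basics} is needed: applied to $f$ at $w$ and to $g$ at $z$ and combined with the scalar bounds above, it gives the stronger estimates $\|\tilde{H}(w)\| \leq \sqrt{c_1}/|u_{\Ha_n}(w)|^{1/2}$ and $\|\tilde{K}(z)\| \leq \sqrt{c_2}/|u_{\Ha_n}(z)|^{1/2}$, so the cross term is only of order $|u_{\Ha_n}(z)|^{-1}$ and the projected part of $H(w)$ at base $z$ stays of order $|u_{\Ha_n}(z)|^{-1}$. Feeding these into the decomposition makes both terms of $\|H(w)\|_{\Ha_n, z}^2$ equal to $\LandauO(u_{\Ha_n}(z)^{-4})$, giving $\|H(g(z))\|_{\Ha_n, z} \leq c_1''/u_{\Ha_n}(z)^2$ with $c_1''$ explicit in $c_1, c_2$. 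Adding the $K$-term then yields \eqref{bas} with $c' = c_1'' + c_2$, whence $f\circ g - id \in \K(\Ha_n, c')$ and $f\circ g \in \P_n$. I expect the base-point change for $H(g(z))$, resolved by the interplay of property c) (monotonicity of $|u_{\Ha_n}|$) and property d) (the self-map constraint on the tangential part), to be the only genuinely nontrivial step; everything else is the triangle inequality and bookkeeping of powers of $u_{\Ha_n}$.
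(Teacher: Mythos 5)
Your proof is correct, and it rests on the same pillars as the paper's: the triangle inequality reducing everything to a bound on $\|H(g(z))\|_{\Ha_n,z}$, the explicit Kobayashi formulas of Lemma \ref{formulas0}, Julia's lemma (Proposition \ref{basics} c)) to transfer bounds from $w=g(z)$ back to $z$, and Proposition \ref{basics} d) as the rescue where the metric bounds are too weak. The execution differs in two ways. First, the paper splits $F(p)$, $p=g(z)$, along the geodesic through $p$, namely into $(F_1(p)-2i\overline{\tilde{p}}^T\tilde{F}(p),0)$ plus $(2i\overline{\tilde{p}}^T\tilde{F}(p),\tilde{F}(p))$, estimating the two pieces at base point $z$ by the plain triangle inequality; the slice interpretation (Proposition \ref{subset}) then applies verbatim to the first piece, and the cross term $\overline{(\tilde{p}-\tilde{z})}^T\tilde{F}(p)$ sits inside the tangential norm \eqref{normorth00}. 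You instead use the orthogonal decomposition at $z$ itself, so the cross term migrates into the radial part and must be split off by hand; this is equivalent bookkeeping. Second, and more substantively, the paper runs a two-case analysis: for $|u_{\Ha_n}(z)|\geq 1$ it uses the metric bounds $\|\tilde{F}(p)\|,\|\tilde{G}(z)\|\lesssim |u_{\Ha_n}(z)|^{-3/2}$ for all terms, and for $|u_{\Ha_n}(z)|\leq 1$ the Julia-type bounds $\lesssim |u_{\Ha_n}(z)|^{-1/2}$ coming from d). You observe that the two kinds of bounds can instead be matched, once and for all, to the two kinds of terms: the metric bound exactly handles the quadratic tangential term $4\|\tilde{H}(w)\|^2/|u_{\Ha_n}(z)|$ (giving order $u_{\Ha_n}(z)^{-4}$ globally), while the d)-bound exactly handles the bilinear cross term $\|\tilde{K}(z)\|\,\|\tilde{H}(w)\|$ (order $|u_{\Ha_n}(z)|^{-1}$, hence $u_{\Ha_n}(z)^{-4}$ after squaring and dividing by $u_{\Ha_n}(z)^2$). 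This eliminates the case distinction and yields a single explicit constant, a genuine streamlining of the paper's argument; what the paper's $p$-adapted splitting buys in exchange is that no cross-term algebra is needed and the appeal to the slice machinery is immediate. (One sentence of yours is loosely worded --- ``feeding these into the decomposition makes both terms $\mathcal{O}(u_{\Ha_n}(z)^{-4})$'' reads as if the d)-bounds alone sufficed, which they do not for the tangential term when $|u_{\Ha_n}(z)|$ is large --- but the surrounding text makes clear you use the metric bound there, so the argument stands.)
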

\begin{proof} Let $f,g\in \P_n$ with $F=(F_1, \tilde{F}):=f-id, G=(G_1, \tilde{G}):=g-id$ and
$$\|F(z)\|_{\Ha_n,z}\leq \frac{c}{u_{\Ha_n}(z)^2}, \quad \quad \|G(z)\|_{\Ha_n,z}\leq \frac{d}{u_{\Ha_n}(z)^2}$$
for all $z\in\Ha_n$. Let $z=(z_1, \tilde{z})\in \Ha_n$ and $p=(p_1, \tilde{p}):=z+G(z)$.\\
From Remark \ref{autoinf} we know that $f\circ g-id$ is an infinitesimal generator on $\Ha_n.$ It remains to estimate the hyperbolic metric of this generator. We have 
\begin{eqnarray*}&&\|(f\circ g)(z)-z\|_{\Ha_n,z} = \|G(z)+ F(z+G(z))\|_{\Ha_n,z} \\
&\leq& \|G(z)\|_{\Ha_n,z} + \|F(z+G(z))\|_{\Ha_n,z}  \leq \frac{d}{u_{\Ha_n}(z)^2} + \|F(p)\|_{\Ha_n,z} \\
&\leq& \frac{d}{u_{\Ha_n}(z)^2} +
\|(F_1(p)-2i \overline{\tilde{p}}^T \tilde{F}(p), 0)\|_{\Ha_n, z} + \|(2i \overline{\tilde{p}}^T \tilde{F}(p), \tilde{F}(p))\|_{\Ha_n, z}.
\end{eqnarray*}
Note that $F_1(p)-2i \overline{\tilde{p}}^T \tilde{F}(p)$ corresponds to the slice of $F$ with respect to the geodesic through $p$ and infinity. Because of Proposition \ref{subset} we know that
$$|F_1(p)-2i \overline{\tilde{p}}^T \tilde{F}(p)|\leq \frac{c}{ |u_{\Ha_n}(p)|} \leq \frac{c}{ |u_{\Ha_n}(z)|},$$
where the second inequality follows from Proposition \ref{basics} c). Together with equation \eqref{normproj00}, this implies
\begin{equation}\label{eq5}
\|(F_1(p)-2i \overline{\tilde{p}}^T \tilde{F}(p), 0)\|_{\Ha_n, z}=\frac{|(F_1(p)-2i \overline{\tilde{p}}^T \tilde{F}(p)|}{|u_{\Ha_n}(z)|}\leq  \frac{c}{u_{\Ha_n}(z)^2}.
 \end{equation}
It remains to show that there exists a constant $C>0$ such that 
$$\|(2i \overline{\tilde{p}}^T \tilde{F}(p), \tilde{F}(p))\|_{\Ha_n, z} \leq \frac{C}{u_{\Ha_n}(z)^2}.$$
First, equation \eqref{normorth00} gives
\begin{align}\label{eq:8}\begin{split}
&\|(2i \overline{\tilde{p}}^T \tilde{F}(p), \tilde{F}(p))\|_{\Ha_n, z}= 2\frac{\sqrt{ \|\tilde{F}(p)\|^2|u_{\Ha_n}(z)|+|\overline{(\tilde{p}-\tilde{z})}^T\tilde{F}(p)|^2}}{|u_{\Ha_n}(z)|} \\
&\leq  2\frac{\sqrt{ \|\tilde{F}(p)\|^2|u_{\Ha_n}(z)|+\|(\tilde{p}-\tilde{z})\|^2\cdot \|\tilde{F}(p)\|^2}}{|u_{\Ha_n}(z)|} 
= 2\frac{\|\tilde{F}(p)\|}{|u_{\Ha_n}(z)|} \sqrt{|u_{\Ha_n}(z)|+\|\tilde{G}(z)\|^2}.
\end{split}\end{align}
Now we differentiate between two cases. Case 1: $|u_{\Ha_n}(z)|\geq 1.$ \\
The equations \eqref{orthogonal} and \eqref{normorth} imply $\frac{2\|\tilde{F}(p)\|}{\sqrt{|u_{\Ha_n(p)}|}}\leq \|\tilde{F}(p)\|_{\Ha_n, p}\leq \frac{c}{u_{\Ha_n}(p)^2},$ thus \begin{equation}\label{eq:9}
\|\tilde{F}(p)\|\leq  \frac{c}{2|u_{\Ha_n}(p)|^{3/2}} \leq \frac{c}{2|u_{\Ha_n}(z)|^{3/2}}.\end{equation}
 In the same way we get
 \begin{equation}\label{eq:10}\|\tilde{G}(z)\|\leq \frac{d}{2|u_{\Ha_n}(z)|^{3/2}}.\end{equation} 
Combining \eqref{eq:9} with \eqref{eq:8} gives
\begin{eqnarray*}
&&\|(2i \overline{\tilde{p}}^T \tilde{F}(p), \tilde{F}(p))\|_{\Ha_n, z} \leq  \frac{c}{|u_{\Ha_n}(z)||u_{\Ha_n}(z)|^{3/2}} \sqrt{|u_{\Ha_n}(z)|+\|\tilde{G}(z)\|^2} \\
&=&
 \frac{c}{|u_{\Ha_n}(z)|^{2}} \sqrt{1+\frac{\|\tilde{G}(z)\|^2}{|u_{\Ha_n}(z)|}} 
\underset{\eqref{eq:10}}{\leq}
 \frac{c}{|u_{\Ha_n}(z)|^{2}} \sqrt{1+\frac{d^2}{4|u_{\Ha_n}(z)|^{4}}} \leq \frac{c\sqrt{1+\frac{d^2}{4}}}{|u_{\Ha_n}(z)|^{2}}. 
\end{eqnarray*}

Case 2: $|u_{\Ha_n}(z)|\leq 1.$\\
From equation \eqref{eq5} we know that 
$|F_1(p)-2i\overline{\tilde{p}}^T \tilde{F}(p)|\leq \frac{c}{|u_{\Ha_n}(z)|}$ and equation \eqref{eq3} implies
$$ \|\tilde{F}(p)\| \leq  \frac{\sqrt{c}}{\sqrt{|u_{\Ha_n}(z)|}}.  $$
Similarly we get $$ \|\tilde{G}(z)\| \leq  \frac{\sqrt{d}}{\sqrt{|u_{\Ha_n}(z)|}}. $$
Hence, we obtain with \eqref{eq:8}:
\begin{eqnarray*}
&&\|(2i \overline{\tilde{p}}^T \tilde{F}(p), \tilde{F}(p))\|_{\Ha_n, z} \leq  2\frac{\sqrt{c}}{|u_{\Ha_n}(z)|^{3/2}} \sqrt{|u_{\Ha_n}(z)|+\|\tilde{G}(z)\|^2} \\
&\leq&
  2\frac{\sqrt{c}}{|u_{\Ha_n}(z)|^{3/2}} \sqrt{|u_{\Ha_n}(z)|+\frac{d}{|u_{\Ha_n}(z)|}} =
2\frac{\sqrt{c}}{|u_{\Ha_n}(z)|^{2}} \sqrt{u_{\Ha_n}(z)^2+d} \\
&\leq &
2\frac{\sqrt{c}}{|u_{\Ha_n}(z)|^{2}} \sqrt{1+d}.
\end{eqnarray*}

\end{proof}

\subsection{Semigroups with generators in \texorpdfstring{$\K(\Ha_n, c)$}{}}

\begin{theorem}\label{thm:1} Let $\{\Phi_t\}_{t\geq 0}$ be a semigroup on $\Ha_n$ with generator $H\in \K(\Ha_n, c).$ Then $\Phi_t\in \P_n$ for every $t\geq0.$
\end{theorem}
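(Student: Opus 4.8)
The plan is to prove, for fixed $t\ge 0$, that $\Phi_t$ is univalent and that $\Phi_t-id\in\K(\Ha_n,c_t)$ for some $c_t$; in fact the estimates will give $c_t\le\sqrt{2}\,ct$. Univalence is the standard injectivity of the time-$t$ map of the autonomous problem \eqref{cauchy} (backward uniqueness of solutions). The first real step is to show that $\Phi_t$ preserves horospheres. Writing $w(s)=\Phi_s(z)$, a direct computation identifies
\[
\tfrac{d}{ds}\,|u_{\Ha_n}(w(s))| \;=\; \Im\big(H_1(w(s))-2i\overline{\tilde w(s)}^T\tilde H(w(s))\big)
\]
via \eqref{slice} as the imaginary part of the slice of $H$ at $w(s)$; by Proposition \ref{subset} this slice lies in $\K(\Ha,c)$ and hence maps into $\overline{\Ha}$, so the derivative is $\ge 0$. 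Thus $s\mapsto|u_{\Ha_n}(\Phi_s(z))|$ is non-decreasing and $|u_{\Ha_n}(\Phi_t(z))|\ge|u_{\Ha_n}(z)|$ for all $t$ and $z$.

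Writing $H^{(t)}:=\Phi_t-id$ and applying the orthogonal decomposition of Lemma \ref{formulas} to $H^{(t)}$ gives
\[
\|\Phi_t(z)-z\|_{\Ha_n,z}^2 \;=\; \frac{s_1(z)^2}{u_{\Ha_n}(z)^2} \;+\; \frac{4\,\|\tilde H^{(t)}(z)\|^2}{|u_{\Ha_n}(z)|},
\]
where $s_1(z)$ is the modulus of the slice of $\Phi_t-id$ at $z$. The orthogonal term is harmless: by \eqref{orthogonal} and \eqref{normorth} one has $\|\tilde H(w)\|\le \frac{c}{2|u_{\Ha_n}(w)|^{3/2}}$, so horosphere preservation yields $\|\tilde H^{(t)}(z)\|\le\int_0^t\|\tilde H(\Phi_s(z))\|\,ds\le\frac{ct}{2|u_{\Ha_n}(z)|^{3/2}}$ and hence $\frac{4\|\tilde H^{(t)}(z)\|^2}{|u_{\Ha_n}(z)|}\le\frac{c^2t^2}{u_{\Ha_n}(z)^4}$. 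Everything therefore reduces to the \emph{slice estimate} $s_1(z)\le \frac{C}{|u_{\Ha_n}(z)|}$ uniformly in $z$, which is the crux.

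For the slice estimate I would pass to one dimension and use the rigidity of $\K(\Ha,\cdot)$. Fix a normalized geodesic $\varphi_\gamma$ and let $h^{(t)}_\gamma$ be the slice of $\Phi_t-id$ with respect to $\varphi_\gamma$; since $\Phi_t-id$ is an infinitesimal generator on $\Ha_n$ (Remark \ref{autoinf}), $h^{(t)}_\gamma$ is an infinitesimal generator on $\Ha$, and $s_1(\varphi_\gamma(\zeta))=|h^{(t)}_\gamma(\zeta)|$ with $\Im\zeta=|u_{\Ha_n}(\varphi_\gamma(\zeta))|$. By Remark \ref{rmk:1} it suffices to check two things. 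First, $h^{(t)}_\gamma$ maps $\Ha$ into $\overline{\Ha}$: this is $\Im(\text{slice of }\Phi_t-id)\ge 0$, which follows from horosphere preservation exactly as in the proof of Proposition \ref{basics} d). Second, $\limsup_{y\to\infty}y\,|h^{(t)}_\gamma(iy)|<\infty$. Granting these, Remark \ref{rmk:1} gives $h^{(t)}_\gamma\in\K(\Ha,c_t')$ with $c_t'=\limsup_{y\to\infty}y|h^{(t)}_\gamma(iy)|$, and then \eqref{chordal_ineq} yields the \emph{uniform} bound $s_1(\varphi_\gamma(\zeta))=|h^{(t)}_\gamma(\zeta)|\le\frac{c_t'}{\Im\zeta}=\frac{c_t'}{|u_{\Ha_n}(\varphi_\gamma(\zeta))|}$ for all $\zeta$, i.e. the slice estimate, provided $c_t'$ is bounded independently of $\gamma$.

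The only delicate point is this last $\limsup$, and here the direct estimate is \emph{easy} rather than hard, because $|u_{\Ha_n}|\to\infty$ along $\varphi_\gamma(iy)$. With $w(s)=\Phi_s(\varphi_\gamma(iy))$, splitting the integrand of $h^{(t)}_\gamma(iy)=\int_0^t\big(H_1(w(s))-2i\overline{\gamma}^T\tilde H(w(s))\big)\,ds$ into the slice of $H$ at $w(s)$ (of modulus $\le\frac{c}{|u_{\Ha_n}(w(s))|}\le\frac{c}{y}$, by Proposition \ref{subset}) plus an error term $2\,|\overline{(\tilde w(s)-\gamma)}^T\tilde H(w(s))|\le\frac{c^2s}{2y^3}$ (using $\|\tilde w(s)-\gamma\|\le\frac{cs}{2y^{3/2}}$ and $\|\tilde H(w(s))\|\le\frac{c}{2y^{3/2}}$), one obtains $s_1(\varphi_\gamma(iy))\le\frac{ct}{y}+\frac{c^2t^2}{4y^3}$, hence $y\,s_1(\varphi_\gamma(iy))\le ct+\frac{c^2t^2}{4y^2}\to ct$ and $c_t'\le ct$ uniformly in $\gamma$. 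Combining the slice estimate $s_1(z)\le\frac{ct}{|u_{\Ha_n}(z)|}$ with the orthogonal bound gives $\|\Phi_t(z)-z\|_{\Ha_n,z}^2\le\frac{2c^2t^2}{u_{\Ha_n}(z)^4}$, so $\Phi_t-id\in\K(\Ha_n,\sqrt{2}\,ct)$ and $\Phi_t\in\P_n$. I expect the main obstacle to be recognizing that $s_1(z)$ cannot be controlled by integrating $H$ along the flow when $|u_{\Ha_n}(z)|$ is small (the error term blows up like $|u_{\Ha_n}(z)|^{-3}$); the way around it is never to estimate $s_1$ directly for small $|u_{\Ha_n}(z)|$, but to obtain that regime for free from the behaviour at $\infty$ through the one-dimensional rigidity of $\K(\Ha,\cdot)$.
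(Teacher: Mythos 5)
Your proof is correct, and at the two places where the real work happens it takes a genuinely different route from the paper's. (i) Horosphere preservation \eqref{eq:12}: the paper transfers to $\B_n$, shows $e_1$ is a boundary regular fixed point with dilatation coefficient $1$ via Theorem 0.3 of \cite{MR2578602}, and invokes Julia's lemma; you instead differentiate $s\mapsto|u_{\Ha_n}(\Phi_s(z))|$ along the flow and identify the derivative as the imaginary part of the slice of $H$ at $\Phi_s(z)$, which is nonnegative by Proposition \ref{subset}. This is more elementary and self-contained (the only point worth saying out loud is that the flow is $C^1$ in $s$, which it is, being a solution of \eqref{cauchy}). (ii) The main estimate: the paper bounds $\int_0^t\|H(\Phi_s(z))\|_{\Ha_n,z}\,ds$ by decomposing the integrand at each time $s$ and estimating both Kobayashi norms at $z$ pointwise; this forces the case distinction $|u_{\Ha_n}(z)|\geq 1$ versus $|u_{\Ha_n}(z)|\leq 1$ (the second case being only sketched, by reference to Theorem \ref{semigroup}) and produces the constant $c\bigl(t+\int_0^t\sqrt{1+c^2s^2}\,ds\bigr)$, quadratic in $t$. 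You instead apply the orthogonal decomposition of Lemma \ref{formulas} to the displacement $\Phi_t-id$ itself, reducing everything to a bound on the slice of $\Phi_t-id$, and you obtain that bound from the one-dimensional rigidity in Remark \ref{rmk:1}: nonnegative imaginary part (from horosphere preservation, exactly as in Proposition \ref{basics} d)) together with a $\limsup$ computation at $\infty$ --- where $|u_{\Ha_n}|\to\infty$ makes the error term $2|\overline{(\tilde{\Phi}_s-\tilde{z})}^T\tilde{H}(\Phi_s(z))|$ harmless --- already yields the pointwise bound \eqref{chordal_ineq} everywhere on the geodesic, uniformly in $\gamma$. This is precisely the move that makes the regime of small $|u_{\Ha_n}(z)|$ come for free, so the case analysis disappears, and it gives the clean conclusion $\Phi_t-id\in\K(\Ha_n,\sqrt{2}\,ct)$, linear in $t$ and within a factor $\sqrt{2}$ of what the one-dimensional additivity $l(\Phi_t)=t\,l(H)$ suggests is optimal --- strictly better than the constant the paper's argument delivers.
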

\begin{proof}
Firstly, for every $t\geq 0$ and $R>0,$ the map $\Phi_t$ maps the horosphere $E_{\Ha_n}(\infty, R)$ into itself, i.e.
\begin{equation}\label{eq:12} |u_{\Ha_n}(\Phi_t(z))|\geq |u_{\Ha_n}(z)|\end{equation}
for every $z\in \Ha_n.$ This can be seen as follows:\\
Let $G$ be the corresponding generator in the unit ball, i.e. $G=C'(C^{-1}) \cdot (H \circ C^{-1}).$ Then $G$ satisfies the inequality
$$ \|G(z)\|\leq\|G(z)\|_{\B_n, z} \leq \frac{c}{u_{\B_n}(z)^2}=\frac{c|1-z_1|^4}{(1-\|z\|^2)^2}. $$
Putting $z=r\cdot e_1$ gives 
$$ \|G(re_1)\| \leq \frac{c(1-r)^4}{(1-r^2)^{2}}=\frac{c(1-r)^{2}}{(1+r)^{2}}. $$
From this it follows immediately that $$\lim_{(0,1)\ni r\to 1}G(re_1)=0 \quad \text{and} \quad  \lim_{(0,1)\ni r\to 1}
\frac{G_1(re_1)}{r-1}=0.$$
Theorem 0.3 in \cite{MR2578602} implies that $e_1$ is a boundary regular fixed point for the generated semigroup with boundary dilatation coefficient 1. Hence we can apply Julia's lemma.\\

Let $z=(z_1,z_2)\in \Ha_n$ and write $\Phi_t=(\Phi_{1,t}, \tilde{\Phi}_t),$ $H=(H_1, \tilde{H}).$ The semigroup $\Phi_t$ satisfies the integral equation  
$$ \Phi_t(z) = z + \int_0^{t}H(\Phi_s(z)) \, ds. $$
Similarly to the proof of Theorem \ref{semigroup}, equation \eqref{eq:9}, we deduce from the fact that $H\in\K(\Ha_n,c)$ and equations \eqref{orthogonal} and \eqref{normorth} that 
\begin{equation}\label{eq:11}
\|\tilde{H}(\Phi_t(z))\| \leq \frac{c}{2|u_{\Ha_n}(z)|^{3/2}}\end{equation}
for every $z\in \Ha_n$ and $t\geq 0;$ and similarly to equation \eqref{eq5} we deduce that
\begin{equation}\label{eq:14}
\|(H_1(\Phi_t(z))-2i \overline{\tilde{\Phi}_t}^T \tilde{H}(\Phi_t(z)), 0)\|_{\Ha_n, z}\leq  \frac{c}{ u_{\Ha_n}(z)^2}
 \end{equation}
for every $z\in \Ha_n$ and $t\geq 0.$\\
First we get
\begin{equation}\label{ineq}\|\tilde{\Phi}_t-\tilde{z}\|\leq\int_0^s \|\tilde{H}(\Phi_\tau(z))\| \, d\tau \leq
\int_0^s \frac{c}{2|u_{\Ha_n}(z)|^{3/2}} \, d\tau =  \frac{cs}{2|u_{\Ha_n}(z)|^{3/2}}. \end{equation}
Case 1: $|u_{\Ha_n}(z)|\geq 1$. Then we have:
\begin{eqnarray*}
&&\|\Phi_t(z)-z\|_{\Ha_n,z} \leq \int_0^t \|H(\Phi_s(z))\|_{\Ha_n,z} \, ds \\
&\leq& \int_0^t \|\binom{H_1(\Phi_s(z))-2i \overline{\tilde{\Phi}_t}^T \tilde{H}(\Phi_s(z))}{0}\|_{\Ha_n,z} \, ds + \int_0^t \|\binom{2i\overline{\tilde{\Phi}_t}^T \tilde{H}(\Phi_t(z)),}{\tilde{H}(\Phi_t(z)),}\|_{\Ha_n,z} \, ds  \\
&\underset{\eqref{eq:14}, \eqref{normorth0}}{\leq}& \int_0^t \frac{c}{ u_{\Ha_n}(z)^2} \, ds + \int_0^t 2\frac{\|\tilde{H}(\Phi_s(z))\|}{|u_{\Ha_n}(z)|}\sqrt{|u_{\Ha_n}(z)|+\|\tilde{\Phi}_t-\tilde{z}\|^2} \, ds \\
&\underset{\eqref{eq:11}, \eqref{ineq}}{\leq}&
 \int_0^t \frac{c}{ u_{\Ha_n}(z)^2} \, ds + \int_0^t \frac{c}{ |u_{\Ha_n}(z)|^{5/2}}\sqrt{|u_{\Ha_n}(z)|+ \frac{c^2s^2}{4|u_{\Ha_n}(z)|^{3}}} \, ds   \\
&=&   \frac{ct}{ u_{\Ha_n}(z)^2} + \int_0^t \frac{c}{ |u_{\Ha_n}(z)|^{2}}\sqrt{1+ \frac{c^2s^2}{4|u_{\Ha_n}(z)|^{4}}} \, ds \\
&\leq&  \frac{ct}{ u_{\Ha_n}(z)^2} + \int_0^t \frac{c}{|u_{\Ha_n}(z)|^{2}}\sqrt{1+ c^2s^2} \, ds  \\
 &=&  c \cdot \frac{t +\int_0^t \sqrt{1+c^2s^2}\, ds}{ u_{\Ha_n}(z)^2}.  
\end{eqnarray*}

The case $|u_{\Ha_n}(z)|\leq 1$ is treated similarly, compare with the proof of Theorem \ref{semigroup}, and we conclude that for every $t\geq 0,$ there exists $C>0$ such that $\|\Phi_t(z)-z\|_{\Ha_n}\leq \frac{C}{u_{\Ha_n}(z)^2}$ for all $z\in\Ha_n.$
Together with Remark \ref{autoinf}, this implies that $\Phi_t\in \P_n.$
\end{proof}

\begin{remark} Let $H:[0,\infty)\times \Ha_n\to \C^n$ be a $\K(\Ha_n, c)-$Herglotz vector field, i.e. $H(t, \cdot)\in\K(\Ha_n, c)$ for almost every $t\geq 0$ and $H$ satisfies certain regularity conditions, see Definition 1.2 in \cite{MR2887104}. In this case, one can solve the non-autonomous version of equation \eqref{cauchy}, namely the Loewner equation
\begin{equation}\label{Loewner}\frac{\partial\Phi_t(z)}{\partial t} = H(t, \Phi(t)), \quad \Phi_0(z)=z\in \Ha_n,\end{equation}
which gives a family $\{\Phi_t\}_{t\geq0}$ of univalent self-mappings of $\Ha_n$, see Theorem 1.4 in \cite{MR2887104}. A slight variation of the proof of Theorem \ref{thm:1} shows that $\Phi_t\in \P_n$ for all $t\geq 0$ also in this case.
\end{remark}

\begin{question} Let $f\in \P_1.$ In \cite{MR1201130}, Section 4, it is shown that there exists a $\K(\Ha, c)-$Herglotz vector field $H$ and a time $T\geq0$ such that $f=\Phi_T$, where $\{\Phi_t\}_{t\geq0}$ is the solution of equation \eqref{Loewner}.
 What can be said in the higher dimensional case? 
\end{question}

\subsection{On the behavior of iterates}

Let $F:\B_n\to \B_n$ be holomorphic. We say that $p\in\overline{\B_n}$ is the Denjoy-Wolff point of $F$ if $F^n\to p$ for $n\to\infty$ locally uniformly. The basic results about the behavior of the iterates $F^n$ for $n\to\infty$ can be found in \cite{MR1098711}, Chapter 2.2. In particular we have (Theorem 2.2.31):  \begin{equation}\label{denj}
\text{$F$ has a Denjoy-Wolff point on the boundary $\partial\B_n$ $\Longleftrightarrow$ $F$ has no fixed points.}
\end{equation}

Now let $f\in \P_n.$ For $n=1$, $f$ has the Denjoy-Wolff point $\infty$ if $f$ is not the identity: As $f$ is not an elliptic automorphism, the classical Denjoy-Wolff theorem implies that $f$ has a Denjoy-Wolff point. This point has to be $\infty$, e.g. because of Proposition \ref{basics} c).\\
Next we will show that this is also true in higher dimensions, provided that $f$ extends smoothly to the boundary point $\infty.$ There are different possible definitions of smoothness of $f$ near $\infty.$ We will use the following one: Let $H(z)=f(z)-z$, and denote by $G:\B_n\to\C^n$ the corresponding generator on $\B_n$, i.e. we have 
$H(z)=(C^{-1})'(C(z))\cdot G(C(z))$
and a small computation shows $$H_1(z)= -\frac{i}{2}(z_1+i)^2 \cdot G_1(C(z)).$$

Our smoothness condition will be that $G_1$ has a $C^3$-extension to $e_1,$ i.e. we can write
$$ G_1(z) = \sum_{\substack{k_1+...+k_n\leq 3 \\
k_1,...,k_n\geq0}}a_{k_1, ..., k_n}(z_1-1)^{k_1}\cdot z_2^{k_2}\cdot... \cdot z_n^{k_n} + \Landauo(\|z-e_1\|^3), $$
which translates to
$$H_1(z) = -\frac{i}{2}(z_1+i)^2 \cdot  \sum_{k_1+...+k_n\leq 3}a_{k_1, ..., k_n}\left(\frac{-2i}{z_1+i}\right)^{k_1}\cdot \left(\frac{2z_2}{z_1+i}\right)^{k_2}\cdot... \cdot \left(\frac{2z_n}{z_1+i}\right)^{k_n} + \Landauo(\|C(z)-e_1\|^3),$$
or
\begin{eqnarray}\begin{split}\label{smoothness}H_1(z) &= b_{0,...,0}\cdot (z_1+i)^2 + (z_1+i) \cdot  \sum_{k_1+...+k_n= 1}b_{k_1, ..., k_n} z_2^{k_2}\cdot... \cdot z_n^{k_n} \\
&+   \sum_{k_1+...+k_n= 2}b_{k_1, ..., k_n} z_2^{k_2}\cdot... \cdot z_n^{k_n} 
+ (z_1+i)^{-1} \cdot  \sum_{k_1+...+k_n= 3}b_{k_1, ..., k_n} z_2^{k_2}\cdot... \cdot z_n^{k_n}\\
 &+ \Landauo(|z_1+i|^{-1}\cdot \|(1, z_2, ..., z_n)\|^3)
\end{split}
\end{eqnarray}
for some coefficients $b_{k_1, ..., k_n}\in \C.$
\begin{theorem}
Let $f\in \P_n, f\not= \text{id},$ and assume that \eqref{smoothness} is satisfied. Then $\infty$ is the Denjoy-Wolff point of $f$.
\end{theorem}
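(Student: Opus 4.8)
The plan is to transfer everything to the ball and then lean on Abate's Denjoy--Wolff theory. Put $F:=C\circ f\circ C^{-1}:\B_n\to\B_n$, so that ``$\infty$ is the Denjoy--Wolff point of $f$'' means exactly ``$e_1$ is the Denjoy--Wolff point of $F$''. Since $f\neq\mathrm{id}$, Proposition \ref{basics} a) guarantees that $F$ is not an automorphism of $\B_n$. I would first recall what the proof of Proposition \ref{basics} c) already gives, \emph{without} invoking \eqref{smoothness}: applying Julia's lemma there shows that $e_1$ is a boundary regular fixed point of $F$ with boundary dilatation coefficient $\beta_{e_1}\le 1$, and that every horosphere $E_{\B_n}(e_1,R)$ is $F$-invariant.

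With this in hand, the end of the argument is bookkeeping, \emph{provided} we know that $F$ has no interior fixed point. Indeed, by \eqref{denj} the absence of interior fixed points forces $F$ to possess a Denjoy--Wolff point $\tau\in\partial\B_n$; by the theory in \cite{MR1098711}, Chapter 2.2, $\tau$ is the unique boundary regular fixed point whose dilatation coefficient is $\le 1$ (equivalently, the unique boundary point all of whose horospheres are invariant). Since $e_1$ is such a point, $\tau=e_1$, which is the assertion of the theorem. So the whole theorem reduces to one statement (A): if $f\in\P_n$, $f\neq\mathrm{id}$, and \eqref{smoothness} holds, then $f$ has no fixed point in $\Ha_n$.

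The crux is (A), and it is here that the smoothness hypothesis must be used. My first move would be a slice reduction. Suppose $f(z_0)=z_0$ with $z_0=(z_{0,1},\tilde z_0)$, and write $H=f-\mathrm{id}=(H_1,\tilde H)\in\K(\Ha_n,c)$, so $H(z_0)=0$. Taking $\gamma:=\tilde z_0$ and the normalized geodesic $\varphi_\gamma$ through $z_0$ and $\infty$, Proposition \ref{subset} gives $h_\gamma\in\K(\Ha,c)$; this slice vanishes at the base point and has non-negative imaginary part, so its representation \eqref{measure} has zero measure and hence $h_\gamma\equiv 0$. Plugging $h_\gamma\equiv 0$ into the inequality \eqref{eq3} of Proposition \ref{basics} d) along $\varphi_\gamma(\Ha)$ forces first $\tilde H$ and then $H_1$ to vanish on the whole geodesic, i.e. $f$ fixes, pointwise, a complex geodesic ending at $\infty$. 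After conjugating by an automorphism of $\Ha_n$ fixing $\infty$ (Proposition \ref{basics} b)) I may assume this geodesic is the central one $\{\tilde z=0\}$.

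The remaining, and by far the hardest, step is to promote ``$f$ fixes the central geodesic pointwise'' to ``$f=\mathrm{id}$''. This is exactly the point at which the several-variable situation differs from the one-dimensional one: one cannot simply invoke the equality case of Julia's lemma, because in $\B_n$ a non-automorphism may perfectly well fix a complex geodesic joining an interior point to a boundary fixed point with $\beta=1$. I expect to have to feed the vanishing of $H$ on $\{\tilde z=0\}$ into the $C^3$-expansion \eqref{smoothness} (which controls $H_1$ near $e_1$) and to combine it with the coupling \eqref{eq3} between $\tilde H$ and the slices $h_\gamma$ for $\gamma\neq 0$, in order to force every remaining coefficient, and hence $H$ itself, to vanish. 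This rigidity step is genuinely false without \eqref{smoothness}, so I anticipate it to be the main obstacle; once it is settled, the skeleton above delivers the theorem.
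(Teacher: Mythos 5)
Your reduction is correct as far as it goes, and in fact it recovers several steps of the paper's own argument: the skeleton ``no interior fixed point $\Rightarrow$ Denjoy--Wolff point on the boundary $\Rightarrow$ it must be $\infty$ by horosphere invariance'' is exactly the paper's, and your observation that a fixed point $z_0$ kills the slice through $z_0$ and $\infty$ is sound (if $h_\gamma(\zeta_0)=0$ at an interior point, the representation \eqref{measure} forces $\mu=0$, hence $h_\gamma\equiv 0$; then \eqref{eq3} gives $\tilde H=0$ and so $H_1=0$ along the whole geodesic). But the proposal then stops at precisely the point where the theorem actually lives. Your final step --- promoting ``$H$ vanishes on one complex geodesic ending at $\infty$'' to ``$H\equiv 0$'' --- is not proved; you only announce that you ``expect'' to extract it from \eqref{smoothness} combined with \eqref{eq3}, and you correctly note that without \eqref{smoothness} the statement is false. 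Since the smoothness hypothesis is used nowhere else in your argument, what remains unproven is not a routine verification but the entire content of the theorem: everything before it is soft reduction.

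For comparison, here is how the paper fills that hole, and how it would complete your reduction. For each $\gamma$, Proposition \ref{subset} and Remark \ref{rmk:1} give $\limsup_{y\to\infty} y|h_\gamma(iy)|\le c$, while $\|\tilde H\|\le \tfrac{c}{2}|u_{\Ha_n}|^{-3/2}$ makes the term $2i\overline{\gamma}^T\tilde H(\varphi_\gamma(iy))$ negligible, so $\limsup_{y\to\infty}|iy\,H_1(\varphi_\gamma(iy))|\le c$ \emph{uniformly in} $\gamma$. Feeding this into the expansion \eqref{smoothness} forces all coefficients $b_{k_1,\dots,k_n}$ with $k_1+\dots+k_n\le 2$ to vanish, so $K(\gamma):=\lim_{y\to\infty} iy\,H_1(\varphi_\gamma(iy))$ exists, is a polynomial in $\gamma$, and is bounded, hence \emph{constant in} $\gamma$. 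This constancy across geodesics is the rigidity you are missing: in your situation $H$ vanishes on the central geodesic, so $K(0)=0$, hence $K\equiv 0$, hence every slice is the Cauchy transform of the zero measure, so $h_\gamma\equiv 0$ for all $\gamma$ and $H\equiv 0$ by Remark \ref{rem2}, contradicting $f\neq\mathrm{id}$. (The paper argues the same dichotomy directly, without assuming a fixed point: either $K\equiv 0$ and $f=\mathrm{id}$, or $K$ is a nonzero constant and then each $h_\gamma$, being a nonzero Cauchy transform of a nonnegative measure, has strictly positive imaginary part and hence no zeros, so $H$ has no zeros at all --- which makes your preliminary slice-through-the-fixed-point reduction unnecessary.) Until you supply an argument of this kind, the proposal is a correct plan with its central step outstanding, not a proof.
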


\begin{proof}
Write $f(z)=z+H(z),$ where $H\in \K(\Ha_n, c)$ and $H=(H_1, \tilde{H}).$ Let $\gamma\in \C^{n-1}$. If we can show that the slice $h_\gamma(\zeta)=H_1(\varphi(\zeta))-2i\overline{\gamma}^T \tilde{H}(\varphi_\gamma(\zeta))$ has no zeros, then we are done:\\
 This implies that $H$ has no zeros because of \eqref{eq:7} and \eqref{orthogonal}. Hence, $f$ has no fixed points and \eqref{denj} implies that $f$ has a Denjoy-Wolff point. This point has to be $\infty$ because of Proposition \ref{basics} c).\\ 
Similarly to the proof of Theorem \ref{semigroup}, equation \eqref{eq:9}, we have
$$ \|\tilde{H}(z)\| \leq \frac{c}{2|u_{\Ha_n}(z)|^{3/2}},$$
and thus $$\|\tilde{H}(\varphi_\gamma(\zeta))\|\leq \frac{c}{2|u_{\Ha_n}(\varphi_{\gamma}(\zeta))|^{3/2}} =
 \frac{c}{2 \Im(\zeta)^{3/2}}.$$
Consequently, $\lim_{y\to \infty} y |\overline{\gamma}^T \tilde{H}(\varphi_\gamma(iy))|=0$. On the other hand, we know from Proposition \ref{subset} that $h_\gamma\in \K(\Ha, c)$ which implies (see Remark \ref{rmk:1})
$$\limsup_{y\to \infty} y |h_\gamma(iy)|=\limsup_{y\to \infty} y |H_1(\varphi(iy))-2i\overline{\gamma}^T \tilde{H}(\varphi_\gamma(iy))| \leq c,$$
which gives us
\begin{equation}\label{boundedness}\limsup_{y\to \infty}  |iy \cdot H_1(\varphi_\gamma(iy))| \leq c.\end{equation}

Now we use the assumption of the smoothness of $H_1:$\\
Because of \eqref{boundedness}, all coefficients $b_{k_1,...,k_n}$ from \eqref{smoothness} with $k_1+...+k_n\leq 2$ have to be 0. Thus, $\lim_{y\to \infty} iy \cdot H_1(\varphi_\gamma(iy))=:K(\gamma)$ exists and is a polynomial in $\gamma=(\gamma_2, ..., \gamma_n):$
$$K(\gamma) =  \sum_{k_1+...+k_n= 3}b_{k_1, ..., k_n} \gamma_2^{k_2}\cdot... \cdot \gamma_n^{k_n}. $$ As $K(\gamma)$ is bounded, it has to be constant. \\
If $K(\gamma)\equiv 0,$ then all slices of $H$ are zero, hence $H=0$ by Remark \ref{rem2} and $f$ is the identity, a contradiction.\\
Hence $K(\gamma)$ is a non-zero constant and $h_\gamma(\zeta)$ is not identically zero, which implies (e.g. by using the representation \eqref{measure}) that $h_\gamma(\zeta)$ has no zeros.

\end{proof}

\begin{question}
Is $\infty$ the Denjoy-Wolff point for every $f\in \P_n$?
\end{question}

\section{Appendix}

Here we prove Lemma \eqref{formulas0}:\\

\begin{textit}
Let $a\in \C, p,v\in \C^{n-1}$ and $z=(z_1, \tilde{z})\in \Ha_n.$ Then the following formulas hold:\end{textit}

\begin{equation}\label{normproj0}\left\|\binom{a}{0}\right\|_{\Ha_n, z} = \frac{|a|}{|u_{\Ha_n}(z)|},
\end{equation}
\begin{equation}\label{normorth0}
\left\|\binom{2i \overline{p}^T v}{v}\right\|_{\Ha_n, z}= 2\frac{\sqrt{ \|v\|^2|u_{\Ha_n}(z)|+|\overline{(p-\tilde{z})}^T v|^2}}{|u_{\Ha_n}(z)|},
\end{equation}
\begin{equation}\label{orth0} \left\|\binom{a-2i \overline{\tilde{z}}^T v}{0} + \binom{2i \overline{\tilde{z}}^T v}{v}\right\|_{\Ha_n, z}^2=
 \left\|\binom{a-2i \overline{\tilde{z}}^T v}{0}\right\|_{\Ha_n, z}^2+
 \left\|\binom{2i \overline{\tilde{z}}^T v}{v}\right\|_{\Ha_n, z}^2 .\end{equation}

\begin{proof} We write $\tilde{z}=(z_2, ..., z_n), v=(v_2, ..., v_n), p = (p_2, ..., p_n).$\\

An explicit formula of the Kobayashi metric for the unit ball is given in \cite{MR2087579}, Theorem 3.4.\footnote{Note, however, that the Kobayashi metric in \cite{MR2087579} differs by a factor of 2 from the one we are using here.} It coincides with the Bergman metric and by using the Cayley map we get the following formula for the upper half-space: 
$$ \|w \|_{\Ha_n, z}^2 = w^T \cdot (g_{j,k})_{j,k} \cdot \overline{w},$$
where $w\in\C^n$ and $(g_{j,k})_{j,k} $ is an $n\times n$-matrix with
$$g_{j,k} = -4 \frac{\partial^2}{\partial z_j\, \partial \bar{z}_k} \log(\Im(z_1) - \sum_{l=2}^n |z_l|^2) ,$$ 
and we get for $j,k\geq 2$:
\begin{eqnarray*} g_{1,1} &=& \frac{1}{ u_{\Ha_n}(z)^2},\quad \quad g_{1,k}= \frac{2i z_k}{u_{\Ha_n}(z)^2}, \quad \quad g_{j,1}= \frac{-2i \overline{z_j}}{u_{\Ha_n}(z)^2},\\
g_{j,j} &=& 4\frac{\Im(z_1) - \sum_{l=2, l\not= j}^n |z_l|^2}{u_{\Ha_n}(z)^2},\quad \quad
g_{j,k} = \frac{4z_k \overline{z_j}}{u_{\Ha_n}(z)^2}, \; k\not=j.
\end{eqnarray*}

The formulas \eqref{normproj0} and \eqref{normorth0} are now straightforward calculations. We obtain

$$\|(a,0)\|_{\Ha_n, z} = \sqrt{ (a, 0) \cdot (g_{j,k})_{j,k} \cdot 
\overline{(a, 0)^T}}= \sqrt{ a \cdot g_{1,1} \cdot 
\overline{a}}=\frac{|a|}{ |u_{\Ha_n}(z)|},$$
and

\begin{eqnarray*}
&& u_{\Ha_n}(z)^2\cdot \|(2i \overline{p}^T v, v)\|_{\Ha_n, z}^2 =u_{\Ha_n}(z)^2\cdot (2i \overline{p}^T v, v^T) \cdot (g_{j,k})_{j,k} \cdot 
\overline{(2i \overline{p}^T v, v^T)^T} \\
&=& u_{\Ha_n}(z)^2\cdot (\sum_{j=2}^{n} g_{j,j} |v_j|^2 + g_{1,1} |2i \overline{p}^T v|^2
+  \sum_{j=2}^{n} g_{j,1} v_j \overline{2i\overline{p}^T v} +
\sum_{k=2}^{n} g_{1,k} \overline{v_j} 2i\overline{p}^T v + \sum_{j,k\geq 2, j\not=k}^{n} g_{j,k} v_j \overline{v_k} )\\
&=&
4\sum_{j=2}^{n}(\Im(z_1) - \|\tilde{z}\|^2)\cdot  |v_j|^2 + 4\sum_{j=2}^{n}|z_j|^2 \cdot |v_j|^2 \\
&+&  4\sum_{j,k\geq 2}^n p_j \overline{p_k}v_j \overline{v_k} - 4\sum_{j,k\geq 2}^{n} \overline{z_j}p_k v_j \overline{v_k} - 4\sum_{j,k\geq 2}^{n} z_j \overline{p_k} \overline{v_j} v_k + 4\sum_{j,k\geq 2, j\not=k}^{n} \overline{z_j}z_k v_j \overline{v_k}\\
&=& 
 4\|v\|^2 \cdot |u_{\Ha_n}(z)| + 4\sum_{j=2}^{n}z_j \overline{z_j} v_j \overline{z_j}\\
&+&   4\sum_{j,k\geq 2}^{n}
\left(p_j \overline{p_k}v_j \overline{v_k} - \overline{z_j}p_k v_j \overline{v_k} - z_j \overline{p_k} \overline{v_j} v_k \right) +  4\sum_{j,k\geq 2, j\not=k}^{n} \overline{z_j}z_k v_j \overline{v_k}
 \\
&=&  4\|v\|^2 \cdot |u_{\Ha_n}(z)| +  4\sum_{j,k\geq 2}^{n}
\left(p_j \overline{p_k}v_j \overline{v_k} - \overline{z_j}p_k v_j \overline{v_k} - z_j \overline{p_k} \overline{v_j} v_k +  \overline{z_j}z_k v_j \overline{v_k} \right) \\
&=&  4\|v\|^2 \cdot |u_{\Ha_n}(z)| +  4|\overline{(p-\tilde{z})}^T v|^2.
\end{eqnarray*}

For formula \eqref{orth0} we just need to show that $ (2i \overline{\tilde{z}}^T v, v^T) \cdot (g_{j,k})_{j,k} \cdot \overline{(a-2i \overline{\tilde{z}}^T v,0)}^T = 0.$ Indeed, we have 
$$  u_{\Ha_n}(z)^2\cdot (g_{j,k})_{j,k} \cdot \overline{(a-2i \overline{\tilde{z}}^T v,0)}^T = (\overline{a}+2i \tilde{z}^T \overline{v},
 -2i\overline{z_2} \overline{a}+4 \overline{z_2}\tilde{z}^T \overline{v}, ..., 
 -2i\overline{z_n} \overline{a}+4 \overline{z_n}\tilde{z}^T \overline{v})^T$$
and 
\begin{eqnarray*}&&(2i \overline{\tilde{z}}^T v, v^T)(\overline{a}+2i \tilde{z}^T \overline{v},
 -2i\overline{z_2} \overline{a}+4 \overline{z_2}\tilde{z}^T \overline{v}, ..., 
 -2i\overline{z_n} \overline{a}+4 \overline{z_n}\tilde{z}^T \overline{v})^T\\
&=& 
 2i \overline{a} \overline{\tilde{z}}^T v -4|\tilde{z}^T \overline{v} |^2 - 2i \overline{a} \overline{\tilde{z}}^T v 
+4|\tilde{z}^T \overline{v} |^2 = 0.
\end{eqnarray*}

\end{proof}

\newpage
\bibliographystyle{amsalpha}
\newcommand{\etalchar}[1]{$^{#1}$}
\providecommand{\bysame}{\leavevmode\hbox to3em{\hrulefill}\thinspace}
\providecommand{\MR}{\relax\ifhmode\unskip\space\fi MR }
\providecommand{\MRhref}[2]{%
  \href{http://www.ams.org/mathscinet-getitem?mr=#1}{#2}
}
\providecommand{\href}[2]{#2}

\end{document}